\newcommand{\mcP}{\mathcal{P}}
\newcommand{\C}{\mathbb{C}}
\newcommand{\N}{\mathbb{N}}
\renewcommand{\P}{{\sf{P}}}
\newcommand{\NP}{{\sf{NP}}}
\newcommand{\ot}{\otimes}
\newcommand{\GL}{\operatorname{GL}}
\newcommand{\SL}{\operatorname{SL}}
\newcommand{\sPf}{\operatorname{sPf}}
\newcommand{\sPfd}{\operatorname{sPf}^{\vee}}
\newcommand{\Pf}{\operatorname{Pf}}
\newcommand{\tn}{\textnormal}
\newcommand{\End}{\textnormal{End}}
\newcommand{\ott}{\bigotimes}
\newcommand{\op}{\oplus}
\newcommand{\bop}{\bigoplus}
\newcommand{\actson}{\curvearrowright}
\theoremstyle{plain}
\newtheorem{theorem}{Theorem}[section]
\newtheorem{corollary}[theorem]{Corollary}
\newtheorem{proposition}[theorem]{Proposition}
\newtheorem{lemma}[theorem]{Lemma}
\newtheorem{problem}[theorem]{Problem}
\theoremstyle{definition}
\newtheorem{defn}[theorem]{Definition}
\theoremstyle{definition}
\newtheorem{observation}[theorem]{Observation}
\theoremstyle{definition}
\theoremstyle{definition}
\newtheorem{example}[theorem]{Example}
\theoremstyle{definition}
\theoremstyle{definition}
\title[Tensors Masquerading as Matchgates]{Tensors Masquerading as Matchgates: Relaxing Planarity Restrictions on Pfaffian Circuits}
\author[Jacob Turner]{Jacob Turner${}^*$}
\thanks{${}^*$ Korteweg-de Vries Institute for Mathematics, University of Amsterdam, 1098 XG Amsterdam, Netherlands.}
\begin{document}
 \begin{abstract}
  Holographic algorithms, alternatively known as Pfaffian circuits, have received a great deal of attention for giving polynomial-time algorithms of $\#\P$-hard problems. Much work has been done to determine the extent of what this machinery can do and the expressiveness of these circuits. One aspect of interest is the fact that these circuits must be planar. Work has been done to try and relax the planarity conditions and extend these algorithms further. We show that an approach based on orbit closures does not work, but give a different technique for allowing the SWAP gate to be used in a Pfaffian circuit given a suitable basis and restricted type of graph. This is done by exploiting the fact that the set of  Pfaffian (co)gates always lies in a hyperplane. We then give a variety of bases that can be chosen such that the SWAP gate acts like a Pfaffian cogate and discuss how many SWAP gates can be implemented in a Pfaffian circuit.
  
 \end{abstract}
  \maketitle

 \noindent {\small {\bf Keywords}: counting complexity, tensor network, holographic algorithms}\\

\section{Introduction}

Leslie Valiant defined a set of linear operators, which he called matchgates, as a set of building blocks from which could be built circuits (which he called matchcircuits) \cite{QCtcbSiPT}. One of the main motivations for matchcircuits is that they are akin to how quantum circuits are defined, but can be computed efficiently \cite{JozsaMiyake}. This relationship is especially clear when matchcircuits are place within the formalism of tensor networks (we shall assume the reader is familiar with these objects) \cite{landsberg2012holographic,morton2010pfaffian}. 

Matchgates are defined by a set of equations, which depends on the number of inputs and outputs of the gate. That is, for a fixed number of inputs/outputs, the gates define a variety \cite{MR1932906,cai2007theory}. The structure of these gates is such that matchcircuits can be computed in polynomial time. This stands in contrast to the complexity of evaluating an arbitrary tensor network, a problem known to be $\#\P$-hard \cite{damm03}.

There is a natural group action on any matchcircuit that leaves the value of the computation unchanged. As such, the orbit of any circuit is another circuit with a polynomial-time evaluation. These are called holographic algorithms (in the setting of tensor networks, they go by the name of Pfaffian circuits and is the name we shall use) and were used by Valiant to devise polynomial time algorithms for problems not known to be in $\P$ and which are closely related to $\NP$ and $\#\P$-hard problems \cite{ValiantFOCS2004}. 

 Pfaffian circuits have the restriction that they are seemingly planar. The map that allows two wires to be switched is called the SWAP gate (or tensor) and acts on a basis of $\C^2\ot \C^2$ by $e_i\ot e_j\mapsto e_j\ot e_i$. It is known that this tensor is not a Pfaffian gate in any basis \cite{margulies2013polynomial}. However, it may the case that a combination Pfaffian gates in some basis can be composed to form the SWAP gate; this question is still open.
 
 Holographic algorithms have also been looked at in the broader context of $\P$ v.s. $\#\P$ in complexity theory, specifically with regard to dichotomy theorems for counting constraint satisfaction problems (\#CSP). 
 
 Given a \#CSP problem, one can visualize it with a bipartite graph: one independent set corresponds to the variables and the other independent set corresponds to the clauses. A variable vertex is connected to a clause vertex if the variable appears in the clause. If there are no restrictions on the graph, those clauses (which can be viewed as tensors or gates) for which the corresponding \#CSP problem can be solved efficiently have been classically. Otherwise, the problem is as hard as any in $\#\P$, assuming $\P\ne\#\P$ \cite{bulatov2010complexity,dyer2010effective,cai2011non,cai2012complexity}.
 
Not all matchgates are  among the allowed gates specified by the aforementioned dichotomy theorem. Despite the fact that \#CSP problems built from these gates are efficiently solvable, there is no contradiction as the graphs associated to the problem are forced to be planar. In fact, the tractable planar-\#CSP problems are precisely those holographic algorithms built from matchgates \cite{cai2010holographic}. This pushes the boundary of polynomial-time solvable \#CSP problems. This could potentially be pushed even further if planarity restrictions were somehow relaxed.
 
 In this paper, we discuss two strategies for allowing a Pfaffian circuit to have SWAP tensors while retaining the tractability of the evaluation problem. This will be done by looking at Pfaffian gates that act identically to the SWAP gate in the context of a given circuit. 
 
 In fact, these two strategies can be used in trying to ``algebraically approximate" a tensor network with a Pfaffian circuit. By this, we mean that there is an algebraic construction that changes the gates in a Pfaffian circuit to tensors that are not normally allowed, but because of the nature of the algebraic construction, preserves the value of the Pfaffian circuit it came from. We hope  these techniques can be used to replace Pfaffian gates with more familiar or convenient gates, aiding in the combinatorial reasoning about such circuits and in designing polynomial time algorithms.
 
 The first strategy involves looking at orbit closures of Pfaffian gates. As already mentioned, there is a natural group action on Pfaffian circuits that does not change its value. Furthermore, if one replaces the gates with those in the Zariski closure of an orbit, one gets a circuit with the same value as the original Pfaffian circuit. Thus one may look to see if the SWAP gate lies in the orbit closure of any Pfaffian gate. If so, then this gate could be replaced with SWAP gate without affecting the value of the circuit, perhaps assuming some special conditions.
 
 The second strategy uses the fact that if a basis is specified, the variety of Pfaffian gates with fixed inputs/outputs lie in a hyperspace. The last step in evaluating a Pfaffian circuit is done by taking the inner product of two Pfaffian gates. But since the set of Pfaffian gates lie in a hyperspace, it may be that $\langle v-u,x\rangle=0$ for all Pfaffian gates $x$, and a particular Pfaffian gate $u$ and tensor $v$. Then one can replace the Pfaffian gate $u$ with the tensor $v$ and leave the value of the circuit unchanged. 
 
 The organization of this paper is as follows: We first give the necessary background on Pfaffian circuits,  the group action on these circuits, as well as the evaluation algorithm and a discussion of the relevant varieties. In Section \ref{sec3}, we discuss the problem of determining which Pfaffian gates contain the SWAP gate in their orbit closures. We conclude that no such Pfaffian gate exists. Lastly, in Section \ref{sec4}, we determine a set of basis changes for certain Pfaffian circuits such that certain Pfaffian gates can be replaced by the SWAP gate. We show however, that at most one SWAP gate can be replaced. 
 
 \section{Background}\label{sec:background}
  
 A Pfaffian circuit is given as a planar bipartite graph with a tensor on each vertex. The variety of tensors that can be placed on a vertex depends on which independent set the vertex is in, as well as its degree. Whereas in the introduction, we referred to all of these tensors as Pfaffian gates, henceforth the tensors on one independent set will be called \emph{Pfaffian gates} and the tensors on the other independent set \emph{Pfaffian cogates}. 
 
  Throughout this paper, we use bra-ket notation. Let $\{v_0,v_1\}$ be a orthonormal basis of $\C^2$ and $\{v_0^*,v_1^*\}$ be the respective dual basis. By $|i_1\cdots i_k\rangle$, $i_j\in\{0,1\}$, we mean $\ot_{j=1}^k{v_{i_j}}$ and $\langle i_1\cdots i_k|:=\ot_{j=1}^k{v^*_{i_j}}$.  For $I\subseteq[n]$, let $\chi_I(i)=1$ if $i\in I$ and 0 otherwise, for $i\in[n]$. Then define $|I\rangle:=|\chi_I(1)\cdots\chi_I(n)\rangle$ and $\langle I|$ likewise.
  
  Let $\mathfrak{J}_n$ be the set of $n\times n$ skew-symmetric matrices, and $\mathfrak{J}:=\bigcup_{n=1}^{\infty}{\mathfrak{J}_n}$. Given an $n\times n$ matrix $M$ and $I\subseteq [n]$, let $M_I$ be the principal minor formed by taking the rows and columns in $I$. Let $\overline{I}$ denote the complement of $I$ in $[n]$. Then we define two maps $\sPf_n:\mathfrak{J}_n\to \C^{2^n}$ and $\sPfd_n:\mathfrak{J}\to(\C^*)^{2^n}$ as follows:
 $$\sPf_n(M)=\sum_{I\subseteq [n]}{\Pf(M_I)|I\rangle}$$
 $$\sPfd_n(M)=\sum_{I\subseteq [n]}{\Pf(M_{I})\langle\overline{I}|}$$
 
 By definition, $\Pf(\emptyset)=1$. The images of $\sPf_n(M)$ and $\sPfd_n(M)$ define varieties $\mcP_n$ and $\mcP^\vee_n$ of \emph{(elementary) arity n} gates and \emph{(elementary) arity n} cogates respectively. We will extend our notion of Pfaffian gates and cogates later. These maps lift to maps on all of $\mathfrak{J}$ and the so the set of gates is $\mathcal{P}:=\bigcup{\mcP_n}$ and the set of cogates is $\mcP^\vee:=\bigcup{\mcP^\vee_n}$.
 
 \begin{defn}\label{def:pfaffcircuit}
  A \emph{Pfaffian circuit} is a planar bipartite graph with two independent sets $W_1,W_2$ such that for every $v\in W_1$, $v$ is assigned a tensor in $\mcP_{\tn{deg}(v)}$ and for every $w\in W_2$, $w$ is assigned a tensor in $\mcP^\vee_{\tn{deg}(v)}$.
 \end{defn}

 To evaluate a Pfaffian circuit, it is first embedded in the plane. Since the graph is bipartite, the dual graph is Eulerian. The Eulerian cycle of the dual graph can be drawn as a planar curve that intersects every edge of the Pfaffian circuit exactly once. An edge is then given the label 1. The next edge the planar curve intersects is given the label 2, the next edge label 3, and so on.
 
 Rearranging the graph so that the edges are all placed parallel to each other, aligned vertically such that the labels increase from top to bottom makes one independent set of vertices all lie to the left of some vertical line and the other independent set lie on the other side. Then one of the two independent sets is equal to a single elementary Pfaffian gate and the other a single elementary Pfaffian cogate.
 
 \begin{defn}\label{def:labeldirectsum}
  Let $M$ be a square matrix whose rows and columns are labeled by $I\subset\N$, with the labels in strictly increasing order. Similarly, let $N$ be another such matrix with labels $J$ and $I\cap J=\emptyset$. We define $M\tilde{\op}N$ to be the direct sum $M\op N$ (which inherits a natural labeling from the labeling of $M$ and $N$) rearranged such that rows and columns are labeled in strictly increasing order.
 \end{defn}
 \begin{example}
  \begin{equation*}
 \bordermatrix{ & 1 & 3\cr
1 &m_{11}&m_{12}\cr
 3&m_{21}&m_{22}}\tilde{\op}
 \bordermatrix{ &2&4&5\cr
 2&n_{11}&n_{12}&n_{13}\cr
 4&n_{21}&n_{22}&n_{23}\cr
 5&n_{31}&n_{32}&n_{33}}=
 \bordermatrix{ &1&2&3&4&5\cr
 1&m_{11}&0&m_{12}&0&0\cr
 2&0&n_{11}&0&n_{12}&n_{13}\cr
 3&m_{21}&0&m_{22}&0&0\cr
 4&0&n_{21}&0&n_{22}&n_{23}\cr
 5&0&n_{31}&0&n_{32}&n_{33}}
  \end{equation*}

 \end{example}

 \begin{lemma}[\cite{morton2010pfaffian}]\label{lem:directsum}
  For two skew-symmetric matrices $M,N\in\mathfrak{J}$ labeled as in Definition \ref{def:labeldirectsum}, $\sPf(M)\ot\sPf(N)=\sPf(M\tilde{\op}N)$ and $\sPfd(M)\ot\sPfd(N)=\sPfd(M\tilde{\op}N)$.
 \end{lemma}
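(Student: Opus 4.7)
The plan is to expand both sides in coordinates and compare coefficients. Let $S_M,S_N\subset\N$ denote the (disjoint) label sets of $M$ and $N$. Bilinearity of the tensor product gives
\[
\sPf(M)\ot\sPf(N)=\sum_{I\subseteq S_M,\,J\subseteq S_N}\Pf(M_I)\,\Pf(N_J)\,|I\rangle\ot|J\rangle,
\]
while the right-hand side reads $\sum_{K\subseteq S_M\cup S_N}\Pf\bigl((M\tilde{\op}N)_K\bigr)\,|K\rangle$. Each $K$ decomposes uniquely as $K=I\sqcup J$ with $I=K\cap S_M$ and $J=K\cap S_N$, so after identifying $|I\rangle\ot|J\rangle$ with $|K\rangle$ in the combined sorted bra-ket, the identity reduces to showing, for every $K$,
\[
\Pf\bigl((M\tilde{\op}N)_K\bigr)=\Pf(M_I)\,\Pf(N_J).
\]

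For this pointwise identity I would use the sum-over-perfect-matchings definition of the Pfaffian. By construction $(M\tilde{\op}N)_{ij}=0$ whenever $i\in S_M$ and $j\in S_N$, so any perfect matching of $K$ pairing an $I$-index with a $J$-index contributes zero. The surviving matchings split as a matching of $I$ together with a matching of $J$, and summing with signs produces $\Pf(M_I)\,\Pf(N_J)$ up to the sign of the shuffle permutation $\pi_K$ that interleaves the sorted sequences $I$ and $J$ into sorted $K$; equivalently, one can apply $\Pf(PAP^T)=\det(P)\Pf(A)$ to the permutation $P$ that block-diagonalizes the minor.

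The main subtlety is this shuffle sign $\sgn(\pi_K)$: for the lemma to hold exactly as stated, it must cancel an equal sign implicit in the identification $|I\rangle\ot|J\rangle\leftrightarrow|K\rangle$. The cleanest way to see the cancellation is to pass to the Grassmann algebra and identify $\sPf(M)$ with the (necessarily central) element $\exp\bigl(\tfrac12\sum_{i,j}M_{ij}\xi_i\xi_j\bigr)$, whose coefficient at the sorted monomial $\xi_I$ is $\Pf(M_I)$. Since quadratic elements commute and the generators for $M$ and $N$ are disjoint, the two exponentials multiply by simply adding exponents, yielding $\sPf(M\tilde{\op}N)$ with no residual sign: all shuffle signs are absorbed into the Grassmann product $\xi_I\xi_J=\sgn(\pi_K)\xi_K$, which is precisely the fermionic tensor convention implicit in the paper's bra-ket notation. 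The $\sPfd$ identity then follows by the same argument applied in the dual Grassmann algebra, or equivalently by transport along the natural inner product pairing on $(\C^2)^{\ot n}$.
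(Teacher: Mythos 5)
Your reduction to the coefficientwise identity, and the matchings/block-permutation analysis of the interleaved minor, are exactly the right first steps (note the paper itself offers no proof to compare with: the lemma is quoted from \cite{morton2010pfaffian}). The gap is in how you dispose of the shuffle sign $\sgn(\pi_K)$. The paper's convention is the ordinary, unsigned tensor product: $|i_1\cdots i_k\rangle=\ot_j v_{i_j}$, and the identification of $|I\rangle\ot|J\rangle$ with $|K\rangle$ is made by placing coordinates into labeled slots, with no sign. There is no ``fermionic tensor convention implicit in the bra-ket notation'' to absorb $\sgn(\pi_K)$; your Grassmann-exponential computation is correct but it proves $\Pf\bigl((M\tilde{\op}N)_K\bigr)=\sgn(\pi_K)\,\Pf(M_I)\Pf(N_J)$, which coincides with the stated identity only if $\sgn(\pi_K)=+1$ for every $K$ whose intersections with the two label sets both have even size.

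That sign is genuinely present. Take $M$ labeled by $\{1,3\}$ with $M_{13}=1$ and $N$ labeled by $\{2,4\}$ with $N_{24}=1$: the coefficient of $|1111\rangle$ in $\sPf(M)\ot\sPf(N)$ is $+1$, while $\Pf(M\tilde{\op}N)=a_{12}a_{34}-a_{13}a_{24}+a_{14}a_{23}=-1$, so the identity fails for this labeling (which has the same crossing pattern as the paper's own illustrative example $\{1,3\}$, $\{2,4,5\}$). A short parity argument on two-element subsets shows that ``$\sgn(\pi_K)=+1$ for all $K$ with both parts of even size'' is equivalent to the label sets being non-crossing, i.e.\ there is no $i<j<k<l$ with $i,k$ labels of $M$ and $j,l$ labels of $N$. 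That non-crossing condition is exactly what the Eulerian-curve labeling of a planar circuit provides, and it is the regime in which the lemma is applied; so to close your argument you must either add the non-crossing hypothesis and verify this parity statement, or replace the slot identification by a genuinely signed (fermionic) one --- which would be a different statement from the one in the paper, whose later manipulations (e.g.\ the even/odd support arguments in Section \ref{sec4}) use the ordinary tensor product.
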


 With the labeling the coming from the planar curve, we see that we can combine the two independent sets into a Pfaffian gate $\sPf(\Xi)$ and a Pfaffian cogate $\sPfd(\Theta)$. 
 
 \begin{defn}
    The value of a Pfaffian circuit is given by the value of the standard pairing $$\langle\sPfd(\Theta),\sPf(\Xi)\rangle.$$
 \end{defn}

 \begin{theorem}[\cite{MR1069389,morton2010pfaffian}]\label{thm:pfaffkernel}
$\langle\sPfd(\Theta),\sPf(\Xi)\rangle=\Pf(\Xi+\tilde{\Theta})$ where $\tilde{\Theta}_{ij}=(-1)^{i+j+1}\Theta_{ij}$.
 \end{theorem}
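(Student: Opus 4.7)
The plan is to expand both sides into sums indexed by subsets of $[n]$ and identify them term-by-term. For the left-hand side, applying the definitions of $\sPf$ and $\sPfd$ together with $\langle \bar J \mid I\rangle = \delta_{J, \bar I}$ collapses the inner product to
\[
\langle \sPfd(\Theta), \sPf(\Xi) \rangle \;=\; \sum_{I \subseteq [n]} \Pf(\Xi_I)\,\Pf(\Theta_{\bar I}),
\]
so the theorem is equivalent to the purely combinatorial identity $\Pf(\Xi + \tilde\Theta) = \sum_{I} \Pf(\Xi_I)\,\Pf(\Theta_{\bar I})$.

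For the right-hand side, I would first establish a general Pfaffian addition formula. Using the perfect-matching definition $\Pf(A) = \sum_{\pi} \sgn(\pi) \prod_{i} A_{\pi(2i-1), \pi(2i)}$ and expanding $\Pf(A+B)$, each monomial selects some pairs of $\pi$ to draw from $A$ and the rest from $B$. Grouping by the set $S \subseteq [n]$ of indices saturated by the $A$-pairs, any matching $\pi$ contributing to $S$ factors uniquely as a shuffle of $S$ and $\bar S$ followed by a matching on $S$ and a matching on $\bar S$; the sign $\sgn(\pi)$ factors correspondingly, giving
\[
\Pf(A + B) \;=\; \sum_{S \subseteq [n]} \epsilon(S)\,\Pf(A_S)\,\Pf(B_{\bar S}),
\]
where $\epsilon(S) \in \{\pm 1\}$ is the sign of the shuffle that sends $[n]$ to the concatenation of $S$ (in natural order) and $\bar S$ (in natural order).

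Applying this with $A = \Xi$ and $B = \tilde\Theta$, the proof reduces to the sign identity $\epsilon(I)\,\Pf(\tilde\Theta_{\bar I}) = \Pf(\Theta_{\bar I})$ for every $I \subseteq [n]$. Pulling the scalars $(-1)^{i+j+1}$ of $\tilde\Theta_{ij}$ out of the Pfaffian expansion of $\tilde\Theta_{\bar I}$ produces a global factor $(-1)^{|\bar I|/2 \,+\, \sum_{j \in \bar I} j}$. A direct inversion count then shows that $\epsilon(I)$ equals this same quantity, where we may assume the parity condition that $|I|$ and $|\bar I|$ are both even, since otherwise both sides are zero. The two signs cancel and the identity holds.

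The main obstacle is precisely this sign bookkeeping: one must fix a single convention for ordering perfect matchings and for the shuffle permutation, then verify that the three independent sources of signs -- the matching sign inside each Pfaffian, the shuffle sign $\epsilon(I)$ from splitting $[n]$, and the $(-1)^{i+j+1}$ built into $\tilde\Theta$ -- combine to $+1$. Everything else is routine expansion; the content of the twist $\tilde\Theta_{ij}=(-1)^{i+j+1}\Theta_{ij}$ is exactly that it is engineered to absorb the shuffle sign, converting the alternating Pfaffian addition formula into the sign-free sum produced by the bra-ket pairing.
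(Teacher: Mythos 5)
Your proof is correct: the pairing collapses to $\sum_{I}\Pf(\Xi_I)\Pf(\Theta_{\bar I})$, the Pfaffian addition (minor summation) formula introduces the shuffle sign $\epsilon(I)$, and for $|I|,|\bar I|$ both even one indeed has $\epsilon(I)=(-1)^{|\bar I|/2+\sum_{j\in\bar I}j}$, which exactly cancels the factor produced by the twist $\tilde\Theta_{ij}=(-1)^{i+j+1}\Theta_{ij}$ (all other terms vanish since Pfaffians of odd-sized minors are zero). The paper itself only cites this theorem without proof, and your argument is essentially the standard one underlying the cited references, so there is nothing further to reconcile.
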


 Theorem \ref{thm:pfaffkernel} tells us that instead of taking the inner product of two vectors of length $2^n$, we can instead compute the Pfaffian of an $n\times n$ skew-symmetric matrix. This allows us to efficiently compute the value of a Pfaffian circuit.
 
 \subsection{The Group Action on Pfaffian Circuits}
 
 Let $\Gamma$ be a Pfaffian circuit and $G=(V,E)$ its underlying graph. Let $V=W_1\cup W_2$ be divided into its two independent sets. Then consider the following vector space: $$U_\Gamma:=\bigg(\bop_{v\in W_1}(\C^2)^{\ot\tn{deg}(v)}\bigg)\op\bigg(\bop_{w\in W_2}{((\C^2)^*)^{\ot\tn{deg}(w)}}\bigg)$$ $$=\bop_{e\in E}(\C^2\op (\C^2)^*).$$ Consider the action of $\SL(2,\C)$ on $\C^2\op(\C^2)^*$ by $g.(v,\phi):=(gv,\phi\circ g^{-1})$. This induces an action on $U_G$ by the group
 $$\SL_\Gamma:=\bop_{e\in E}{\SL(2,\C)}.$$ One could also use the group $\bop_{e\in E}{\GL(2,\C)}$ but this does not add to the number of circuits equivalent to Pfaffian circuits as conjugation by $\GL(V)$ is not a faithful action. 
 
  The Pfaffian (co)gates of $\Gamma$ are all elements of $U_\Gamma$ and the value of $\Gamma$ is invariant under the action of $\GL_\Gamma$ on $U_\Gamma$. So it makes sense to extend our definitions of Pfaffian (co)gates beyond the elementary ones defined previously.
  
  \begin{defn}
   A tensor in $(\C^2)^{\ot n}$ is a Pfaffian gate if it is in the orbit of the group $\SL(2,\C)^{\op n}$ acting on $\mcP_n$. We define a tensor in $((\C^2)^*)^{\ot n}$ to be a Pfaffian cogate if it is in the orbit of $\SL(2,\C)^{\op n}$ acting on $\mcP^\vee_n$. 
  \end{defn}
  
  We do not allow the arbitrary Pfaffian gates and cogates to be associated to vertices in a bipartite graph. There is a compatability condition for $\Gamma$, namely that under the action of $\SL_\Gamma$, $\Gamma$ can be taken to a Pfaffian circuit as in Definition \ref{def:pfaffcircuit}.

\subsection{Equations Defining Pfaffian (Co)Gates}

As previously mentioned, for every $n$, $\mcP_n$ and $\mcP^\vee_n$ form varieties. We now briefly discuss the equations defining these varieties, as done in \cite{landsberg2012holographic}, based off of Valiants original identities \cite{QCtcbSiPT}. Since the SWAP gate is in either $(\C^2)^{\ot 4}$ or $((\C^2)^*)^{\ot 4}$, we will thoroughly examine these particular cases.

Let $P=\sum_{I\subseteq[n]}{\alpha_I|I\rangle}$ be a vector in $(\C^2)^{\ot n}$. The most basic equations that $P$ must satisfy to be in $\mcP_n$ are that $\alpha_I=0$ for $|I|$ odd. The following observation will be important later on.

\begin{observation}\label{obs}
 The variety $\mcP_n$ is contained in a proper linear subspace of $(\C^2)^{\ot n}$.
\end{observation}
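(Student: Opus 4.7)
The plan is to read the linear constraint directly off the definition of $\sPf_n$. Since $\mcP_n$ is by definition the image of the map $\sPf_n(M) = \sum_{I\subseteq[n]} \Pf(M_I) |I\rangle$, the coefficient of each basis vector $|I\rangle$ in any element of $\mcP_n$ equals the Pfaffian of the principal submatrix $M_I$. By the standard convention (and as noted in the sentence immediately preceding the observation), $\Pf(M_I) = 0$ whenever $|I|$ is odd, since the Pfaffian of an odd-dimensional skew-symmetric matrix vanishes.

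Consequently every point of $\mcP_n$ lies in the linear subspace
\[
L_n := \operatorname{span}\{|I\rangle : I \subseteq [n], \ |I| \text{ even}\} \subseteq (\C^2)^{\ot n}.
\]
A quick dimension count gives $\dim L_n = \sum_{k \text{ even}} \binom{n}{k} = 2^{n-1}$, which is strictly less than $\dim (\C^2)^{\ot n} = 2^n$ for every $n \geq 1$. Any odd-cardinality basis vector (for instance $|\{1\}\rangle$) witnesses that $L_n$ is a proper subspace, so it is the desired containing hyperplane (in fact of codimension $2^{n-1}$).

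There is really no substantive obstacle here: the observation is a definitional unpacking of the parity constraint $\alpha_I = 0$ for $|I|$ odd. Its role in the paper appears to be to isolate precisely the hyperplane structure that the author's second main strategy exploits --- namely, since $\mcP_n$ fails to span $(\C^2)^{\ot n}$, there exist nonzero linear functionals vanishing identically on the variety of Pfaffian gates, and such functionals can be used to swap out a Pfaffian gate for a non-Pfaffian tensor $v$ without changing the inner product against any Pfaffian cogate.
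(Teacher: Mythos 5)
Your proof is correct and matches the paper's (implicit) reasoning exactly: the paper states the observation right after noting the parity constraint $\alpha_I=0$ for $|I|$ odd, which is precisely the vanishing of odd-order Pfaffians that you unpack, together with the trivial dimension count showing the even-support span is proper. Nothing further is needed.
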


Similar linear equations hold for $\mcP^\vee_n$. For $n=0,1,2,3$, these linear equations define all of $\mcP_n$, and their counterparts define all of $\mcP_n^\vee$. For $n\ge 4$, more non-trivial equations appear. For $n=4$, we have the extra equation
$$\alpha_{\emptyset}\alpha_{[4]}=\alpha_{[4]}=\alpha_{\{1,2\}}\alpha_{\{3,4\}}-\alpha_{\{1,3\}}\alpha_{\{2,4\}}+\alpha_{\{2,3\}}\alpha_{\{1,4\}},$$ noting that $\alpha_{\emptyset}=\Pf(\emptyset)=1$. In general, the equations arise from the the parameterizations of $\mcP_n$ by Pfaffians of matrices. The general variety is defined by the Grassmann-Pl\"ucker relations. The variety $\mcP^\vee_4$ is defined by the similar equation
$$\alpha_{\emptyset}\alpha_{[4]}=\alpha_{\emptyset}=\alpha_{\{1,2\}}\alpha_{\{3,4\}}-\alpha_{\{1,3\}}\alpha_{\{2,4\}}+\alpha_{\{2,3\}}\alpha_{\{1,4\}}$$ and the linear equations $\alpha_I=0$ for $|I|$ odd. The Grassmann-Pl\"ucker relations can be slightly adapted to describe the variety $\mcP_n^\vee$ in general. The following equations can be found in \cite{manivel2009spinor}, for example.

\begin{theorem}\label{thm:rels}
 Let $G=\sum_{I\subseteq[n]}{\alpha_I|I\rangle}\in(\C^2)^{\ot n}$. Then it lies in $\mcP_n$ if for every disjoint set of integers $R,S,T$, with $|T|=|R|+4k$ ($k>0$), 
 $$\sum_{\stackrel{A\cup B=R\cup T}{A\cap B=\emptyset}}{\epsilon(S\cup A,S\cup B,R,T)\alpha_{S\cup A}\alpha_{S\cup B}}=0$$
 where $\epsilon(S\cup A,S\cup B,R,T)=\pm 1$ and $\alpha_{\emptyset}=1$. Omitting the condition $\alpha_{\emptyset}=1$ defines the cone over the variety, $\C\mcP_n$. These same equations also define the variety $\mcP_n^\vee$ where $G=\sum_{I\subseteq[n]}{\alpha_i\langle I|}$  except with the above sets replaced with their complements in $[n]$.
\end{theorem}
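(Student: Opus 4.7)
The plan is to prove the containment $\mcP_n \subseteq V$ and the reverse containment $V \subseteq \mcP_n$, where $V$ is the subvariety of $(\C^2)^{\ot n}$ cut out by the linear equations $\alpha_I=0$ for $|I|$ odd, together with the quadratic relations stated in the theorem (plus $\alpha_\emptyset=1$). Since $\mcP_n$ is defined as the image of the map $\sPf_n : M \mapsto \sum_I \Pf(M_I)|I\rangle$, both containments can be reduced to classical identities and a reconstruction procedure for the underlying skew-symmetric matrix.

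For the first containment (necessity), I would show that the principal Pfaffians $\Pf(M_I)$ of any skew-symmetric $M \in \mathfrak{J}_n$ satisfy the given relations. These are the Pfaffian analogs of the Grassmann--Pl\"ucker relations: expanding $\Pf(M_{S \cup A}) \Pf(M_{S \cup B})$ as a signed sum over pairs of perfect matchings on $S \cup A$ and $S \cup B$, the terms can be grouped so that the pieces involving $R \cup T$ cancel when summed over all partitions $A \sqcup B = R \cup T$ of the prescribed sizes. The sign $\epsilon(S \cup A, S \cup B, R, T)$ records how one reorders the indices in $S \cup A$ and $S \cup B$ into standard increasing order; tracking these signs gives exactly the identity in the statement. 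This reproduces (in the tensor notation of the paper) the defining equations of the spinor variety given in \cite{manivel2009spinor}.

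For the second containment (sufficiency), given $G = \sum_I \alpha_I |I\rangle$ satisfying all the stated relations, I would construct a candidate skew-symmetric matrix $M$ by setting $M_{ij} := \alpha_{\{i,j\}}$ for $i<j$ (and $M_{ji} = -M_{ij}$), and then prove $\alpha_I = \Pf(M_I)$ by induction on $|I|$. The cases $|I| \in \{0,2\}$ are immediate. For the inductive step at $|I| = 2m \geq 4$, I would specialize the relation of the theorem by choosing $S = I \setminus \{i_0\}$, $R = \{i_0\}$, $T$ a suitable singleton outside $I$ (with $k=0$ excluded, one picks a three-term variant, or equivalently uses the $|R|=1$, $|T|=3$ case by adding dummy indices), to express $\alpha_I$ as a signed sum $\sum_{j \in I \setminus \{i_0\}} \pm\,\alpha_{\{i_0,j\}} \alpha_{I \setminus \{i_0,j\}}$. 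This is precisely the row expansion of $\Pf(M_I)$, so the inductive hypothesis finishes the step. The condition $\alpha_\emptyset=1$ distinguishes the affine variety $\mcP_n$ from its projective cone $\C\mcP_n$.

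The main obstacle will be checking that the signs $\epsilon$ coming from the abstract relation match the signs in the Pfaffian row expansion; this is essentially a bookkeeping step about transpositions needed to put $S \cup A$ and $S \cup B$ in increasing order, but it is where almost all the technical work resides. Finally, for the cogate variety $\mcP_n^\vee$, the parameterization $M \mapsto \sum_I \Pf(M_I) \langle \bar I |$ differs from $\sPf_n$ only by the involution $I \mapsto \bar I$ on index sets, so the same relations apply verbatim with each subset replaced by its complement in $[n]$, exactly as asserted.
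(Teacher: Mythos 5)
The paper does not actually prove this theorem; it invokes the known description of (the big cell of) the spinor variety by quadrics, citing Manivel. Your attempt at a self-contained proof is fine in its first half: showing that the coordinates $\Pf(M_I)$ of any point of $\mcP_n$ satisfy the quadratic relations is the classical Pfaffian analogue of the Grassmann--Pl\"ucker identities, and your matching-cancellation outline is the standard route. The problem is in the sufficiency direction. The specialization you propose does not exist inside the stated family of relations: with $R=\{i_0\}$ and $T$ a singleton you have $|T|=|R|$, i.e.\ $k=0$, which is excluded, and the fallback $(|R|,|T|)=(1,3)$ still violates $|T|=|R|+4k$ with $k>0$ (that would force $|T|\ge 5$). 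Moreover, ``adding dummy indices'' outside $I$ introduces coefficients $\alpha_J$ with $J\not\subseteq I$, so the resulting identity is not the row expansion of $\Pf(M_I)$, and the sentence ``this is precisely the row expansion'' does not hold for the relations as stated; the relations sum over \emph{all} partitions $A\sqcup B=R\cup T$, not over the pairs $(\{i_0,j\},\,I\setminus\{i_0,j\})$ occurring in a row expansion.

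A correct specialization for the induction is $R=\emptyset$, $T$ a $4$-element subset of $I$, $S=I\setminus T$, which yields $\alpha_S\alpha_I=\sum\pm\,\alpha_{S\cup A}\alpha_{S\cup B}$ over the $(2,2)$-splittings of $T$ (a valid identity, but not a row expansion). This exposes the second, more serious gap: the inductive step determines $\alpha_I$ only after dividing by a pivot $\alpha_S$ with $|S|=|I|-4$, and for $|I|\ge 6$ all such pivots may vanish on the point in question (the normalization $\alpha_\emptyset=1$ only helps when $S=\emptyset$, i.e.\ $|I|=4$). Handling this degenerate locus requires the relations with $R\ne\emptyset$ (or another argument entirely), together with control of the unspecified signs $\epsilon$ to know that the term $\alpha_\emptyset\alpha_I$ survives with nonzero coefficient; none of this is addressed in your sketch. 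This is exactly the part of the statement that makes ``the quadrics cut out the spinor variety'' a genuine theorem (due to Cartan, and the reason the paper defers to \cite{manivel2009spinor}) rather than a bookkeeping exercise, so as written your sufficiency argument has a real hole. The final remark on $\mcP_n^\vee$ via the complementation $I\mapsto\overline{I}$ is fine.
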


\section{Orbit Closures of Pfaffian (Co)Gates}\label{sec3}

Theorem \ref{thm:pfaffkernel} implies that the value of a Pfaffian circuit $\Gamma$ is a polynomial in the entries of the Pfaffian (co)gates appearing in the circuit. The set of gates in $\Gamma$ have a group $\SL_\Gamma$ acting on them as previously described in Section \ref{sec:background}. For every set of gates in this orbit, the circuit has the same value. 

Furthermore, since we have a polynomial that takes a constant value on a set, it takes that same value on any point in the Zariski closure of that set. As such, we can extend our definition of Pfaffian circuits even further to include those circuits $\Gamma$ whose gates are in the $\GL_\Gamma$ orbit closure of a set of Pfaffian (co)gates, rather than just the orbit.

The reason for considering this in the context of the SWAP gate is as follows: if there was a Pfaffian circuit as in Definition \ref{def:pfaffcircuit} which included a SWAP gate, then this would imply that the SWAP gate was in the orbit of $\SL(2,\C)^4$ acting on $\mcP_4$ or $\mcP_4^\vee$. However, it has been shown that this is not the case \cite{margulies2013polynomial}.

Under our extended definition, if there is a circuit $\Gamma$ in the orbit closure of a Pfaffian circuit which contains the SWAP gate, this implies that the SWAP gate lies in the orbit closure of $\SL(2,\C)^4$ acting on $\mcP_4$ or $\mcP_4^\vee$. Something similar was done in the context of knot theory to find R-matrices that did not satisfy the Reidemeister moves, yet nonetheless could be used to define polynomial knot invariants \cite{schrijver2012virtual}.

In this setting, we shall see that the SWAP gate does not lie in the orbit closure of any Pfaffian (co)gate. To show this, we will show that no Pfaffian (co)gate agrees with the SWAP gate on the polynomial invariants of $\SL(2,\C)^4\actson (\C^2)^{\ot 4}$ or $\SL(2,\C)^4\actson((\C^2)^*)^{\ot 4}$. The polynomial ring of the first action is known to be generated by four algebraically independent invariants. Using the self-duality of $\SL(2,\C)$, we can then deduce the four algebraically independent invariants that generate the invariant ring of the latter action.

\subsection{The polynomial invariants of $\SL(2,\C)^4$ acting on $(\C^2)^{\ot 4}$ and its dual}

The generators for $\SL(2,\C)^4\actson(\C^2)^{\ot 4}$ were described in \cite{MR2039690}. We first describe these invariants and then determine what values they take on the SWAP gate. We identify the coordinate ring of $(\C^2)^{\ot 4}$ with the ring $\C[x_1,\dots,x_{16}]$. The first invariant is one of Cayley's hyperdeterminants:
\begin{center}
 \begin{tabular}{l l}
  $H(x_1,\dots,x_{16})=$ & $\quad x_1x_{16}-x_2x_{15}-x_3x_{14}+x_4x_{13}$\\
   & $\;-x_5x_{12}+x_6x_{11}+x_7x_{10}-x_8x_9.$
 \end{tabular}
\end{center}
The other three can be described as determinants of the following matrices.
\begin{equation*}
 L=\begin{pmatrix}
  x_1&x_5&x_9&x_{13}\\
  x_2&x_6&x_{10}&x_{14}\\
  x_3&x_7&x_{11}&x_{15}\\
  x_4&x_8&x_{12}&x_{16}
 \end{pmatrix},\quad M=
\begin{pmatrix}
 x_1&x_9&x_3&x_{11}\\
 x_2&x_{10}&x_4&x_{12}\\
 x_5&x_{13}&x_7&x_{15}\\
 x_6&x_{14}&x_8&x_{16}
\end{pmatrix},\quad\tn{and}
\end{equation*}

\begin{equation*}
B=
\begin{pmatrix}
 x_1x_4-x_2x_3&x_1x_8+x_4x_5-x_3x_6-x_2x_7&x_5x_8-x_6x_7\\
 P_1(x_1,\dots,x_{16})&P_2(x_1,\dots,x_{16})&P_3(x_1,\dots,x_{16})\\
 x_9x_{12}-x_{10}x_{11}&x_9x_{16}+x_{12}x_{13}-x_{11}x_{14}-x_{10}x_{15}&x_{13}x_{16}-x_{14}x_{15}
\end{pmatrix}
\end{equation*}
$$ $$
\begin{tabular}{l}
where\\
$P_1(x_1,\dots,x_{16})=x_1x_{12}+x_4x_9-x_3x_{10}-x_2x_{11}$,\\
$P_2(x_1,\dots,x_{16})=x_1x_{16}+x_4x_{13}+x_5x_{12}+x_8x_9-x_3x_{14}-x_2x_{15}-x_7x_{10}-x_6x_{11}$,\\
$P_3(x_1,\dots,x_{16})=x_5x_{16}+x_8x_{13}-x_6x_{15}-x_{7}x_{14} $.
\end{tabular}

\begin{theorem}[\cite{MR2039690}]\label{thm:4qubitinvs}
 For the standard action $\SL(2,\C)^4\actson(\C^2)^{\ot 4}$, the invariant ring $$\C[(\C^2)^{\ot4}]^{\SL(2,\C)^4}=\C[H,\det(L),\det(M),\det(B)].$$
\end{theorem}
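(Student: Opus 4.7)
The plan is to verify Theorem \ref{thm:4qubitinvs} in three stages: first, confirm that each of $H$, $\det(L)$, $\det(M)$, $\det(B)$ is $\SL(2,\C)^4$-invariant; second, prove they are algebraically independent; and third, show that they generate the whole invariant ring by a Hilbert series comparison.

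For invariance, I would reindex the coordinates as $x_{abcd}$ with $a,b,c,d\in\{0,1\}$, identifying $x_k$ with the tensor position given by the $4$-bit binary expansion of $k-1$. Then $H$ is recognizable as the full contraction $\epsilon^{\ot 4}$ applied to $x\ot x$, where $\epsilon$ is the standard $\SL(2,\C)$-invariant symplectic form on $\C^2$; invariance is immediate. The matrices $L$ and $M$ are two flattenings of the tensor that group the four indices into pairs $\{1,2\}|\{3,4\}$ and $\{1,3\}|\{2,4\}$ respectively, identifying $(\C^2)^{\ot 4}$ with $\C^4\ot\C^4$. Under any such flattening the determinant is invariant under $\SL(4,\C)\times\SL(4,\C)$, and hence under the block-diagonal subgroup $\SL(2,\C)^4$. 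For $\det(B)$, the top and bottom rows consist of $2\times 2$ minors that are already $\SL(2,\C)$-invariants of appropriate subflattenings, while the middle row consists of degree-two covariants of matching weight; one verifies invariance of the $3\times 3$ determinant by a direct calculation on each of the four factors.

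For algebraic independence I would apply the Jacobian criterion: evaluate the matrix $(\partial f_i/\partial x_j)$ at a carefully chosen sparse tensor (for instance, one supported on just a handful of coordinates, making $L$, $M$, $B$ mechanical to compute) and exhibit a nonvanishing $4\times 4$ minor. This forces the transcendence degree to be at least $4$. The matching upper bound is dimension-theoretic: $\dim_\C(\C^2)^{\ot 4}-\dim_\C\SL(2,\C)^4=16-12=4$, and since one can exhibit a tensor with finite $\SL(2,\C)^4$-stabilizer, the generic orbit has codimension $4$.

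To promote algebraic independence to generation, I would compute the Hilbert series of $\C[(\C^2)^{\ot 4}]^{\SL(2,\C)^4}$ via the Weyl integration formula applied to the compact form $SU(2)^4$. Parametrizing each $SU(2)$ torus by $\operatorname{diag}(z_j,z_j^{-1})$ on $|z_j|=1$, Molien's formula becomes
\[
\mathcal{H}(s)=\frac{1}{16}\oint\!\!\cdots\!\!\oint\frac{\prod_{j=1}^4(1-z_j^2)(1-z_j^{-2})}{\prod_{\epsilon\in\{-1,+1\}^4}\bigl(1-s\prod_{i=1}^4 z_i^{\epsilon_i}\bigr)}\prod_{j=1}^4\frac{dz_j}{2\pi i\,z_j}.
\]
Iterated residue evaluation (selecting poles inside the unit disk one variable at a time) should yield
\[
\mathcal{H}(s)=\frac{1}{(1-s^2)(1-s^4)^2(1-s^6)},
\]
which is precisely the Hilbert series of a polynomial ring on four algebraically independent generators of degrees $2,4,4,6$. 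Since $H,\det(L),\det(M),\det(B)$ have exactly these degrees and are algebraically independent by the previous step, this Hilbert series identity forces the graded inclusion $\C[H,\det(L),\det(M),\det(B)]\subseteq\C[(\C^2)^{\ot 4}]^{\SL(2,\C)^4}$ to be an equality in each degree, proving the theorem. The main obstacle will be the residue calculation itself: bookkeeping of poles in four variables is tedious though mechanical. If the contour integral proves unwieldy, a good fallback is to decompose $\Sym^n((\C^2)^{\ot 4})$ as an $\SL(2,\C)^4$-representation using the Cayley--Sylvester formula on each factor and extract the multiplicity of the trivial representation degree by degree.
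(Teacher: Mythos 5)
This theorem is not proved in the paper at all: it is imported directly from Luque--Thibon \cite{MR2039690}, and your outline essentially reconstructs their argument --- exhibit the four invariants $H,\det(L),\det(M),\det(B)$ of degrees $2,4,4,6$, verify algebraic independence, and compare with the Molien--Weyl series $\frac{1}{(1-s^2)(1-s^4)^2(1-s^6)}$, which is indeed the correct Hilbert series and, together with independence, forces generation degree by degree. So your proposal is correct and follows the same route as the cited source; the only discrepancy with the paper is that the paper offers no proof, only the citation.
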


Now we consider the invariant ring of $\SL(2,\C)^4\actson((\C^2)^*)^{\ot 4}$ by the contragradient action. As it turns out, this action is isomorphic to the standard action of $\SL(2,\C)^4\actson(\C^2)^{\ot 4}$ by the self-duality of $\SL(2,\C)$. Given a matrix $M\in\SL(2,C)$, conjugating by $$T=\begin{pmatrix}0&-1\\1&0\end{pmatrix}$$ yields $(M^{-1})^T$. Let $\Theta=T^{\ot 4}$.

We define the linear map $\phi:(\C^2)^{\ot 4}\to((\C^2)^*)^{\ot 4}$ by $v\mapsto (\Theta v)^T$. Then for $g=(g_1,g_2,g_3,g_4)\in\SL(2,\C)^4$, $$\phi(g.v)=\phi((\ot_{i=1}^4{g_i})v)=(\Theta(\ot_{i=1}^4{g_i})\Theta^{-1} \Theta v)^T$$ 
$$=v^T\Theta^T(\ot_{i=1}^4{g_i^{-1}})=g.\phi(v).$$ Thus $\phi$ is an equivariant isomorphism. We define $H^\dagger:((\C^2)^*)^{\ot 4}\to \C$ by $H^\dagger=H(\phi(v)^T)$. We define $\det(L)^\dagger$, $\det(M)^\dagger$, and $\det(B)^\dagger$ likewise. 

\begin{corollary}\label{cor:dual4qubitinvs}
 For the contragradient action of $\SL(2,\C)^4\actson((\C^2)^*)^{\ot 4}$, the invariant ring $$\C[((\C^2)^*)^{\ot 4}]^{\SL(2,\C)^4}=\C[H^\dagger,\det(L)^\dagger,\det(M)^\dagger,\det(B)^\dagger].$$
\end{corollary}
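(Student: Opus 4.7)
The plan is to deduce the corollary as an immediate consequence of Theorem \ref{thm:4qubitinvs} together with the equivariant isomorphism $\phi:(\C^2)^{\ot 4}\to ((\C^2)^*)^{\ot 4}$ constructed just before the statement. The principle I would invoke is the following: whenever $\phi:V\to W$ is a $G$-equivariant linear isomorphism between two finite-dimensional representations of a group $G$, pullback of polynomial functions defines a ring isomorphism $\phi^{\#}:\C[W]\to\C[V]$, $f\mapsto f\circ\phi$, that commutes with the respective $G$-actions on $\C[V]$ and $\C[W]$. Consequently $\phi^{\#}$ restricts to an isomorphism of invariant subrings $\C[W]^{G}\xrightarrow{\sim}\C[V]^{G}$, and in particular sends any generating set of one to a generating set of the other.

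First I would record the computation already sketched in the paragraph preceding the corollary, namely that $\phi(g.v)=g.\phi(v)$ for all $g\in\SL(2,\C)^4$, so that $\phi$ is indeed an equivariant isomorphism from the standard action to the contragradient action. Then, applying the general principle above with $V=(\C^2)^{\ot 4}$, $W=((\C^2)^*)^{\ot 4}$, $G=\SL(2,\C)^4$, I would conclude that
\[
\C[W]^{G}\;=\;\bigl\{\,f\circ\phi^{-1}:f\in\C[V]^{G}\bigr\}.
\]
By Theorem \ref{thm:4qubitinvs}, $\C[V]^{G}=\C[H,\det(L),\det(M),\det(B)]$, so its image under the inverse pullback is generated by $H\circ\phi^{-1}$, $\det(L)\circ\phi^{-1}$, $\det(M)\circ\phi^{-1}$, $\det(B)\circ\phi^{-1}$.

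Finally I would check that these four functions agree with the daggered objects defined in the paper. Unwinding the definition $H^\dagger(w)=H(\phi^{-1}(w))$ (and likewise for the others) shows that the daggered invariants are precisely the transports of the four generators of the standard invariant ring through $\phi$, so
\[
\C[((\C^2)^*)^{\ot 4}]^{\SL(2,\C)^4}=\C[H^\dagger,\det(L)^\dagger,\det(M)^\dagger,\det(B)^\dagger],
\]
which is the claim. The argument is essentially formal given Theorem \ref{thm:4qubitinvs}; there is no real obstacle, only a bookkeeping point to make sure that the $(\,\cdot\,)^T$ appearing in the definition of $\phi$ really corresponds to post-composition with $\phi^{-1}$, so that invariance of $H,\det(L),\det(M),\det(B)$ under the standard action indeed transports to invariance of their daggered versions under the contragradient action. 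The mild subtlety worth highlighting is algebraic independence: since $\phi^{\#}$ is a ring isomorphism, algebraic independence of the four original generators is automatically preserved, so the daggered generators are again four algebraically independent invariants.
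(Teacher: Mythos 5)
Your proposal is correct and follows essentially the same route the paper intends: the corollary is meant as an immediate consequence of the equivariant isomorphism $\phi$ constructed just before it, with the daggered invariants being exactly the transports of $H,\det(L),\det(M),\det(B)$ through that isomorphism (note $\Theta^{-1}=\Theta$, so the paper's $H(\Theta v^T)$ is your $H\circ\phi^{-1}$). Your explicit statement of the pullback principle and the remark on preserved algebraic independence just make explicit what the paper leaves implicit.
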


Now note that there is an isomorphism of varieties $\Psi:\mcP_4\to(\mcP^\vee_4)^T$, where $(\mcP^\vee_4)^T\subseteq(\C^2)^{\ot 4}$ is the image of $\mcP^\vee_4$ under the transpose map. It is given by $(|0\rangle\langle1|+|1\rangle\langle0|)^{\ot 4}=\sum_{I\subseteq[4]}{|I\rangle\langle\overline{I}|}$. In fact, this map is an involution. Now we can express $\Theta$ as $(-|0\rangle\langle 1|+|1\rangle\langle0|)^{\ot 4}=\sum_{I\subseteq[4]}{(-1)^{|I|}|I\rangle\langle\overline{I}|}$. Since both $\mcP_4$ and $(\mcP^\vee_4)^T$ lie in the subspace $V$ of vectors $\sum_{I\subseteq[4]}{\alpha_I|I\rangle}$ where $\alpha_I=0$ if $|I|$ is odd, we see that $\Theta|_{V}=\Psi|_{V}\implies \Theta|_{\mcP_4}=\Psi|_{\mcP_4}$. So we have the following fact.

\begin{proposition}\label{prop:oneinvring}
 Let $\SL(2,\C)^4\actson \mcP_4$ by the standard action and $\SL(2,\C)^4\actson\mcP^\vee_4$ by the contragradient action, then $\C[\mcP_4]^{\SL(2,\C)^4}\cong\C[\mcP^\vee_4]^{\SL(2,\C)^4}.$
\end{proposition}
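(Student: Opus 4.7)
The plan is to promote $\phi$ from an equivariant isomorphism of ambient spaces, already constructed above, to an equivariant isomorphism of the two subvarieties $\mcP_4$ and $\mcP_4^\vee$, and then take pullback of regular functions.

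First I would verify $\phi(\mcP_4)=\mcP_4^\vee$. For any $v\in\mcP_4$, the identity $\Theta|_{\mcP_4}=\Psi|_{\mcP_4}$ proved in the paragraph above lets me rewrite $\phi(v)=(\Theta v)^T=(\Psi v)^T$. Since $\Psi$ sends $\mcP_4$ isomorphically onto $(\mcP_4^\vee)^T$, taking the transpose then lands in $\mcP_4^\vee$. Running the same argument on $\phi^{-1}\colon w\mapsto \Theta^{-1}(w^T)$, and using that $\Psi$ is an involution together with $(\mcP_4^\vee)^T\subseteq V$ (so $\Theta^{-1}$ and $\Psi$ also agree on this set), shows $\phi^{-1}(\mcP_4^\vee)=\mcP_4$. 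Hence $\phi|_{\mcP_4}\colon\mcP_4\xrightarrow{\sim}\mcP_4^\vee$ is an isomorphism of varieties.

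Next I would observe that this restriction inherits equivariance from the ambient $\phi$: the identity $\phi(g\cdot v)=g\cdot\phi(v)$ was established above for all $g\in\SL(2,\C)^4$, where the source carries the standard action and the target the contragradient action. Consequently, pullback yields a $\C$-algebra isomorphism $\phi^\ast\colon\C[\mcP_4^\vee]\to\C[\mcP_4]$ intertwining the two $\SL(2,\C)^4$-actions on coordinate rings, and restricting to the subrings of invariants produces the desired isomorphism.

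There is no substantial obstacle here; the essential work is front-loaded in constructing $\phi$ and verifying the coincidence $\Theta|_{\mcP_4}=\Psi|_{\mcP_4}$ via the containment $\mcP_4\subseteq V$. The only technical point worth being careful about is that $\phi|_{\mcP_4}$ is a regular morphism of varieties in both directions rather than a mere set-theoretic bijection, but this is automatic since $\phi$ and $\phi^{-1}$ are the restrictions of linear maps between the ambient vector spaces.
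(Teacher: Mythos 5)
Your proposal is correct and rests on the same two ingredients as the paper's own proof --- the equivariant map $\phi$ and the coincidence $\Theta|_{V}=\Psi|_{V}$ on the even-support subspace --- but it concludes along a genuinely different route. The paper invokes Theorem \ref{thm:4qubitinvs} and Corollary \ref{cor:dual4qubitinvs} and checks, generator by generator, that $H,\det(L),\det(M),\det(B)$ restricted to $\mcP_4$ correspond to $H^\dagger,\det(L)^\dagger,\det(M)^\dagger,\det(B)^\dagger$ restricted to $\mcP_4^\vee$ under $v\mapsto\Psi v^T$; you never touch the generators, instead upgrading $\phi$ to an equivariant isomorphism of the two subvarieties and pulling back coordinate rings. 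Your route is more abstract and actually supplies a detail the paper leaves implicit, namely that $\phi$ carries $\mcP_4$ \emph{onto} $\mcP_4^\vee$ (your surjectivity check via $\phi^{-1}$, the involutivity of $\Psi$, and $(\mcP_4^\vee)^T\subseteq V$ is correct). What the paper's generator-level argument buys is precisely what is used afterwards: the explicit fact that the same four polynomials control both sides, which feeds the subsequent evaluation on the SWAP gate and the Macaulay2 computation with the ideals $I$ and $J$. One caveat, inherited from the statement itself but slightly more exposed in your final step: $\mcP_4$ is not literally stable under $\SL(2,\C)^4$ (basis changes move gates off the elementary variety --- that is the point of holographic transformations), so ``restricting to the subrings of invariants'' of $\C[\mcP_4]$ and $\C[\mcP_4^\vee]$ should be read as the paper effectively does, i.e.\ as the images of the ambient invariant rings under restriction (equivalently, invariants of the orbit closures $\overline{\SL(2,\C)^4\cdot\mcP_4}$ and $\overline{\SL(2,\C)^4\cdot\mcP_4^\vee}$); since your $\phi$ is equivariant on the ambient spaces and matches the two subvarieties, your argument goes through verbatim under that reading.
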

\begin{proof}
 By Theorem \ref{thm:4qubitinvs} and Corollary \ref{cor:dual4qubitinvs}, it suffices to show that $H(v)=H^\dagger(\Psi(v))$, $\det(L)(v)=\det(L)^\dagger(\Psi(v))$, etc. when $H,\det(L),\det(M),$ and $\det(B)$ are restricted to $\mcP_4$ and $H^\dagger,\det(L)^\dagger,\det(M)^\dagger$, and $\det(B)^\dagger$ are restricted to $\mcP_4^\vee$. 
 
 We see that for $v\in\mcP_4^\vee$, $H^\dagger(v)=H(\phi(v)^T)=H(\Theta v^T)=H(\Psi v^T)$ and $\Psi v^T\in\mcP_4$. A similar argument holds for the other three generators.
\end{proof}

As a consequence of this, to determine if the SWAP gate is in the orbit closure of $\SL(2,\C)^4\actson\mcP_4$ or the orbit closure of $\SL(2,\C)^4\actson\mcP^\vee_4$, we need only look at its values on the invariants $H,\det(L),\det(M),\det(B)$.

\subsection{The SWAP gate is not in the orbit closure of any Pfaffian (co)gate}

We first write down the SWAP gate as a vector and compute its values on the invariants $H,\det(L),\det(M)$, and $\det(B)$. As an element of $((\C^2)^*)^{\ot 2}\ot(\C^2)^{\ot 2}$, the SWAP gate is expressed as $|00\rangle\langle 00|+|01\rangle\langle10|+|10\rangle\langle01|+|11\rangle\langle11|$. Vectorizing it using the transpose map $(\C^2)^{\ot 2}\to((\C^2)^*)^{\ot 2}$, we get that the SWAP gate is the vector $|0000\rangle+|1010\rangle+|0101\rangle+|1111\rangle$. We can also vectorize it to an element of $((\C^2)^*)^{\ot 4}$ similarly, getting the vector $\langle 0000|+\langle 1010|+\langle 0101|+\langle 1111|$ If we index its coefficients by $\beta_1,\dots,\beta_{16}$, we have that $\beta_1=\beta_{6}=\beta_{11}=\beta_{16}=1$ and $\beta_{i}=0$ for all other $i$.

Plugging $\beta_i$ into $x_i$ into the generators of the invariant ring gives that $H(\tn{SWAP})=2$, $\det(L)(\tn{SWAP})=1$, $\det(M)(\tn{SWAP})=0$ and $\det(B)(\tn{SWAP})=0$. We let $I$ be the ideal formed by these polynomials. Then we note that vanishing locus of the ideal $J$ generated by the equations $\alpha_I=0$ for $|I|$ odd and the polynomial $$\alpha_{\emptyset}\alpha_{[4]}=\alpha_{\{1,2\}}\alpha_{\{3,4\}}-\alpha_{\{1,3\}}\alpha_{\{2,4\}}+\alpha_{\{2,3\}}\alpha_{\{1,4\}}$$ contains both $\mcP_4$ and $(\mcP^\vee_4)^T$ as subvarieties.

\begin{theorem}
 The \tn{SWAP} gate is not in the orbit closure of any Pfaffian (co)gate under the respective actions of $\SL(2,\C)^4$.
\end{theorem}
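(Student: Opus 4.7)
The plan is to exploit the fact that $\SL(2,\C)^4$-invariants are constant on orbits and hence on their Zariski closures, so if SWAP lies in the orbit closure of some Pfaffian gate $P\in\mcP_4$, then the four generators from Theorem \ref{thm:4qubitinvs} must take the same values on $P$ as on SWAP, namely $H(P)=2$, $\det(L)(P)=1$, $\det(M)(P)=0$, $\det(B)(P)=0$. By Proposition \ref{prop:oneinvring}, the analogous statement for cogates reduces via $\Psi$ to the same system on $(\mcP_4^\vee)^T$. Since both $\mcP_4$ and $(\mcP_4^\vee)^T$ are contained in $V(J)$, it suffices to show that no point of $V(J)$ achieves the quadruple $(2,1,0,0)$.

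To make this concrete I would parameterize $V(J)$ by setting $s=\alpha_\emptyset$, $p=\alpha_{[4]}$, and $a,b,c,d,e,f$ for the six pair coefficients $\alpha_{\{1,2\}},\ldots,\alpha_{\{3,4\}}$, subject only to the Grassmann-Pl\"ucker relation $sp=af-be+cd$; the eight odd-cardinality coefficients vanish by the defining equations of $J$. Substituting into the four generators and repeatedly using this relation yields compact factored forms
\[ H=2(af+cd),\qquad \det(L)=-(be-cd)^2, \]
\[ \det(M)=(cd-af)(af+cd-2be),\qquad \det(B)=2(af-be)(aspf-bcde). \]
The main computational obstacle lies here: $\det(B)$ is cubic in sixteen variables, and teasing out this convenient product form requires careful bookkeeping with the Grassmann-Pl\"ucker relation.

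Given these factorizations, I would set the four invariants equal to $(2,1,0,0)$ and run a case analysis. The first two equations force $af+cd=1$ and $(be-cd)^2=-1$. Then $\det(M)=0$ splits into (i) $af=cd$, pinning $af=cd=\tfrac{1}{2}$ and $be=\tfrac{1}{2}\pm i$, or (ii) $af+cd=2be$, pinning $be=\tfrac{1}{2}$, $cd=\tfrac{1}{2}\mp i$, $af=\tfrac{1}{2}\pm i$. In each of the four resulting branches a short calculation (using $sp=af-be+cd$) gives $(af-be)$ and $(aspf-bcde)$ both equal to the same $\pm i$, so $\det(B)=2(\pm i)^2=-2\neq 0$, contradicting $\det(B)=0$. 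Hence no $P\in V(J)$—and therefore no Pfaffian (co)gate—shares SWAP's invariant values, proving the theorem.
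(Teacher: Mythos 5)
Your proposal is correct and follows the same overall strategy as the paper: invariants are constant on orbit closures, the cogate case is folded into the gate case via Proposition \ref{prop:oneinvring}, and the problem reduces to showing that no point of $V(J)$ (which contains $\mcP_4$ and $(\mcP_4^\vee)^T$) attains SWAP's invariant values $(H,\det L,\det M,\det B)=(2,1,0,0)$. Where you genuinely diverge is in how that emptiness is certified: the paper simply asks Macaulay2 to verify $I+J=\C[x_1,\dots,x_{16}]$, whereas you restrict the four generators to $V(J)$ by hand and factor them. I checked your claimed forms: on $V(J)$ the matrices $L$ and $M$ become block-sparse, giving $\det L=(sp-af)(be-cd)$ and $\det M=(sp-be)(cd-af)$, and $B$ reduces to a matrix whose only nonzero entries are $B_{11}=sf$, $B_{13}=-de$, $B_{22}=sp+af-be-cd$, $B_{31}=-bc$, $B_{33}=ap$; after substituting the relation $sp=af-be+cd$ these are exactly your $-(be-cd)^2$, $(cd-af)(af+cd-2be)$, and $2(af-be)(aspf-bcde)$, and $H$ restricts to $2(af+cd)$. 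Your case analysis is also right: in every branch $af-be$ and $(af)(sp)-(be)(cd)$ both equal the same $\pm i$, forcing $\det B=-2\neq 0$. So your argument buys a computer-free, human-checkable proof that additionally explains \emph{why} the system is inconsistent (the sextic invariant is pinned to $-2$ once the other three are matched), at the cost of the bookkeeping you flag for $\det B$; the paper's Gr\"obner-basis check is shorter to state but is a black box.
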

\begin{proof}
 By Proposition \ref{prop:oneinvring}, it suffices to show that $I+J=\C[x_1,\dots,x_{16}]$. Running this computation in Macaulay2 quickly yields that this is true.
\end{proof}

One may wonder if there is something to be gained by looking at the orbit closure of $\GL(2,\C)^4$ acting on the SWAP gate as it is a larger group. However, this is not the case. As previously mentioned, it was shown in \cite{margulies2013polynomial} that the SWAP gate is not in the $\GL(2,\C)^4$ orbit of any Pfaffian (co)gate. Thus we need only consider the boundary of these orbits. We recall a powerful tool in analyzing boundaries of orbits:

\begin{theorem}[The Hilbert-Mumford Criterion \cite{MR506989}]\label{thm:hbcriterion}
 For a linearly reductive group $G$ acting on a variety $V$, if $v\in\overline{G.w}\setminus G.w$ then there exists a 1-parameter subgroup (or cocharacter) $\lambda:k^\times\to G$ (where $\lambda$ is a homomorphism of algebraic groups), such that $\lim_{t\to 0}{\lambda(t).w}=v$. 
\end{theorem}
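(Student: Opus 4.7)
The plan is to follow Mumford's original argument in \cite{MR506989}, reducing the problem to the Cartan decomposition of $G$ over a complete discrete valuation field. The key idea is that orbit closures are detected by one-dimensional families, and every such family can be normalized into a 1-parameter subgroup after discarding bounded factors.

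First I would apply the valuative criterion of properness. Since $v \in \overline{G.w}$, there exists a smooth curve $C$, a distinguished point $c_0 \in C$, and (possibly after an \'etale base change so that the orbit map $G \to G.w$ admits a local section) a morphism $\psi : C \setminus \{c_0\} \to G$ such that $c \mapsto \psi(c) \cdot w$ extends to a morphism $C \to V$ with $c_0 \mapsto v$. Completing at $c_0$ yields a complete DVR $R = \widehat{\mathcal{O}}_{C, c_0}$ with residue field $k$, uniformizer $t$, and fraction field $K$, together with an element $g \in G(K)$ such that the family $t \mapsto g(t) \cdot w$ is defined over all of $\spec R$ and has closed fiber $v$.

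Next I would invoke the Cartan decomposition $G(K) = G(R) \cdot \Lambda \cdot G(R)$, where $\Lambda$ is the image of the cocharacter lattice of a fixed maximal torus $T \subseteq G$ under evaluation at $t$. This factors $g = h_1 \cdot \lambda(t) \cdot h_2$ with $h_i \in G(R)$ and $\lambda : k^\times \to T \hookrightarrow G$ a cocharacter. Each $h_i$ specializes at $t = 0$ to $\bar h_i \in G(k)$, so $w' := \bar h_2 \cdot w \in G.w$ is well defined. Because the composite family $g(t) \cdot w = h_1(t) \cdot \lambda(t) \cdot (h_2(t) \cdot w)$ extends across $t = 0$ and both $h_1(t), h_2(t)$ are regular $R$-valued, separatedness of $V$ forces $\lambda(t) \cdot w'$ to extend as well, with limit $\bar h_1^{-1} \cdot v$. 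Replacing $\lambda$ by its conjugate under $\bar h_1$ and absorbing $\bar h_2$ into the starting vector by replacing $w$ with its $G$-translate $w'$ produces a 1-parameter subgroup $\lambda'$ realizing $\lim_{t \to 0} \lambda'(t) \cdot w = v$.

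The main obstacle is establishing the Cartan decomposition for $K$-points of an arbitrary linearly reductive $G$. For $\GL_n$ this is just the Smith normal form of invertible matrices over the PID $R$; for a general reductive $G$ one must either embed into $\GL_n$ and control the descent back to $G$, or work intrinsically via the structure theory of $G(K)$ and its Bruhat--Tits building. A secondary delicacy concerns the on-the-nose equality $\lim \lambda'(t) \cdot w = v$ as stated: the classical Hilbert--Mumford conclusion gives only that the limit lies in the unique closed orbit of $\overline{G.w}$, so obtaining the target as exactly $v$ relies on absorbing the specializations $\bar h_1, \bar h_2$ into both the choice of cocharacter and, implicitly, the chosen representative of $v$ within its $G$-orbit.
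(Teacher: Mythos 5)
The paper does not prove this statement at all: it is quoted verbatim as a classical result with a citation to Mumford's book, and is then used only as motivation for the (independently verified) observation that the central cocharacter $t\mapsto tI_2^{\ot 4}$ drives every $\GL(2,\C)^4$-orbit to the origin. So there is no ``paper proof'' to compare against; what you have written is an attempt to prove the quoted theorem itself, along the standard Mumford--Kempf line (valuative criterion plus the Cartan--Iwahori decomposition $G(K)=G(R)\cdot\Lambda\cdot G(R)$). That is the right skeleton, but two steps do not hold up. First, the interchange of limits: from $h_1(t)^{-1}g(t).w=\lambda(t)h_2(t).w$ you correctly get that $\lambda(t)h_2(t).w$ extends over $t=0$ with closed fibre $\bar h_1^{-1}.v$, but you then replace $h_2(t)$ by its specialization $\bar h_2$ and claim the limit is unchanged ``by separatedness.'' Separatedness only gives uniqueness of a limit once it exists; here $\lambda(t)h_2(t).w$ and $\lambda(t)\bar h_2.w$ are genuinely different families, and since $\lambda(t)$ has poles the difference $\lambda(t)(h_2(t).w-\bar h_2.w)$ need not tend to $0$. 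The real argument (Mumford, Kempf, Birkes) decomposes into $\lambda$-weight spaces and concludes only that $\lim_{t\to0}\lambda(t).\bar h_2 w$ exists and lies in any prescribed closed $G$-stable subset of $\overline{G.w}$ that the original family hits --- not that it equals the original limit point.

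Second, and more seriously, the statement as literally quoted (that the limit can be made to equal the given boundary point $v$) is false, so no proof strategy can close this gap; you sense this in your last paragraph, but conjugating by $\bar h_1$ and translating $w$ by $\bar h_2$ only moves you within $G(k)$-orbits and cannot repair it. A standard counterexample: let $\SL(2,\C)$ act on binary cubics $\Sym^3\C^2$ and take $w=x^2y$, $v=x^3$. Then $v\in\overline{G.w}\setminus G.w$, but for any cocharacter $\lambda$ (conjugate to $\tn{diag}(t^a,t^{-a})$), the condition that $\lim_{t\to0}\lambda(t).w$ exist forces the negative-weight coefficients of $w$ in the corresponding basis to vanish, and one checks directly that every existing limit is either $0$ or back in $G.w$; the orbit of $x^3$ is never reached. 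The correct conclusion is the Kempf--Birkes form: for every closed $G$-stable $S\subseteq\overline{G.w}$ meeting $\overline{G.w}$ (in particular the unique closed orbit), some cocharacter limit of $w$ lands in $S$. None of this affects the paper, whose only use of the criterion is the elementary special case witnessed explicitly by the central cocharacter.
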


Now we note that $\lambda(t):=t(I_2)^{\ot 4}$ is a cocharacter of the action of $\GL(2,\C)^4$ on $(\C^2)^{\ot 4}$. We see that it sends every vector to the origin as $t\to0$. Thus every $\GL(2,\C)^4$ orbit closure intersects the origin an so the closures of any two orbits intersect. This makes looking at $\GL(2,\C)^4$ orbit closures rather meaningless.

\section{Tensors Orthogonal to Pfaffian Gates}\label{sec4}
 
 In this section, we find Pfaffian (co)gates that in a certain basis act identically to the SWAP gate, although other restrictions must be applied to the circuit. Thus certain cogates can be replaced with the SWAP gate and not change the value of the Pfaffian circuit. 
 
 We rely on Observation \ref{obs} and the fact that if a vector $v-u$ is in the orthogonal complement of $\mcP_n$ then $\langle u-v,P\rangle=0$ for any Pfaffian gate $P\in\mcP_n$. Thus $\langle u,P\rangle=\langle v,P\rangle$, leading to the following observation:
 \begin{observation}
  Suppose we have a Pfaffian circuit and a tensor $S$ equal to $P+Q$ where $P$ is a Pfaffian (co)gate in the circuit and $Q$ is orthogonal to every Pfaffian gate of the same size as in the circuit, then $P$ may be replace by $S$ without changing the value of the circuit.
 \end{observation}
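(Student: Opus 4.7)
The plan is to use multilinearity of tensor network evaluation in the vertex tensors, together with the claim that after contracting the rest of the Pfaffian circuit, the environment of the modified vertex is itself a Pfaffian gate of the correct arity. Let $v_i$ be the vertex carrying $P$, let $d=\deg(v_i)$, and assume without loss of generality that $P$ is a cogate; the gate case is symmetric.

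Since tensor network contraction is multilinear in each individual vertex tensor, writing $S=P+Q$ gives
\[
\operatorname{Val}(S) = \operatorname{Val}(P) + \operatorname{Val}(Q),
\]
where $\operatorname{Val}(X)$ is the value of the circuit when $X$ is placed at $v_i$ and every other (co)gate is left untouched. Hence it suffices to show $\operatorname{Val}(Q) = 0$.

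Let $L_i\subset[N]$ be the edge labels incident to $v_i$, and let $\Phi\in(\C^2)^{\ot d}$ be the tensor obtained by contracting $\sPf(\Xi)$ with every cogate of the circuit other than the one at $v_i$, along every edge not in $L_i$. Then by construction $\operatorname{Val}(Q) = \langle Q,\Phi\rangle$. The crux is to show $\Phi\in\C\mcP_d$. To see this, parametrize the vertex at $v_i$ by a formal Pfaffian cogate $\sPfd(M)$ for a generic skew matrix $M$; by Theorem \ref{thm:pfaffkernel} the value of the parametrized circuit equals $\Pf(\Xi+\tilde\Theta(M))$, and by construction it also equals $\langle\sPfd(M),\Phi\rangle = \sum_{I\subseteq L_i}\Pf(M_I)\Phi_{\overline I}$. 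Expanding $\Pf(\Xi+\tilde\Theta(M))$ via the perfect-matching definition of the Pfaffian and grouping matchings by which pairs lie entirely inside $L_i\times L_i$ realizes each coefficient of $\Pf(M_I)$ as a Pfaffian of a fixed principal submatrix built from $\Xi$ and $\tilde\Theta_{-i}$, where $\tilde\Theta_{-i}$ denotes $\tilde\Theta$ with the $v_i$-block zeroed. Matching coefficients forces $\Phi = c\cdot\sPf(\Xi')$ for some fixed skew-symmetric $\Xi'$, so $\Phi\in\C\mcP_d$. The hypothesis that $Q$ is orthogonal to every Pfaffian gate of arity $d$ then gives $\langle Q,\Phi\rangle=0$, so $\operatorname{Val}(S)=\operatorname{Val}(P)$.

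The principal technical obstacle I foresee is the sign bookkeeping in the Pfaffian expansion, in particular reconciling $\tilde\Theta_{ij}=(-1)^{i+j+1}\Theta_{ij}$ with the permutation signs arising from the shuffled direct sum in Definition \ref{def:labeldirectsum}. Once these signs are pinned down, the rest of the argument reduces to the one-line orthogonality $\langle Q,\Phi\rangle = 0$.
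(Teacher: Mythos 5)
Your localization strategy is genuinely different from what the paper does, and heavier than necessary. The paper never looks at the environment of the modified vertex: it stays at the level of the one final pairing. The circuit value is $\langle\sPfd(\Theta),\sPf(\Xi)\rangle$, where by Lemma \ref{lem:directsum} the total cogate is the (shuffled) tensor product of the individual cogates; replacing $P$ by $S=P+Q$ changes the total cogate by $Q\ot\hat{\Theta}$, where $\hat{\Theta}$ collects the untouched cogates. Since $Q$ annihilates every Pfaffian gate of arity $d=\deg(v_i)$, it annihilates the linear span of $\mcP_d$, which is exactly the even-support subspace (Observation \ref{obs}; the $\sPf$ image spans it), i.e.\ $Q$ has odd support. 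As $\hat{\Theta}$ has even support and $\Xi\in\mcP_N$ has even support, $\langle Q\ot\hat{\Theta},\Xi\rangle=0$ by parity alone, and bilinearity of the pairing finishes the argument. So the paper's proof is a two-line orthogonality remark; your proof buys, in exchange for more work, a stronger structural statement (the environment of a vertex of a Pfaffian circuit is itself essentially Pfaffian) that the paper never needs.

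There is one overstated step in your crux claim. Matching coefficients against the linearly independent polynomials $\Pf(M_I)$ does give $\Phi_J=\pm\Pf(C_{K_0\cup J})$, where $C$ is the fixed $N\times N$ skew-symmetric matrix built from $\Xi$ and the sign-twiddled remaining cogates and $K_0=[N]\setminus L_i$. But passing from this to $\Phi=c\cdot\sPf(\Xi')$ for a $d\times d$ skew matrix $\Xi'$ uses the Pfaffian Schur-complement identity, which needs $\Pf(C_{K_0})\neq 0$; when $\Pf(C_{K_0})=0$ the environment can be a pure spinor with vanishing $\emptyset$-coefficient (e.g.\ proportional to $|1\cdots 1\rangle$ on the $L_i$ slots), which is not a multiple of any $\sPf(\Xi')$. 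The gap is easily repaired, and the repair shows the detour is unnecessary: either argue by Zariski closure that $\Phi$ lies in the closed cone over the spinor variety, or, more simply, note that $\Phi_J=\pm\Pf(C_{K_0\cup J})$ already forces $\Phi_J=0$ whenever $|J|\not\equiv|K_0|\pmod 2$, so under the parity assumptions the paper imposes in Section \ref{sec4} (even arities) $\Phi$ lies in the even-support subspace, i.e.\ in the linear span of $\mcP_d$; orthogonality of $Q$ to $\mcP_d$ is orthogonality to that span, hence $\langle Q,\Phi\rangle=0$ without knowing that $\Phi$ is itself Pfaffian. Do also record the parity hypothesis explicitly, since without it ($|K_0|$ odd) neither your claim $\Phi\in\C\mcP_d$ nor the even/odd bookkeeping goes through as stated.
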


 Under certain changes of basis, we show that the SWAP gate minus a Pfaffian cogate is in the orthogonal complement of the subspace containing the Pfaffian gates (after the change of basis).

 Suppose we have a Pfaffian circuit, which in reduced form has gate $\Xi=\sPf(X)$ and cogate $\Theta=\sPfd(T)$. We know that the value of the circuit is $\Theta(\Xi)=\sPf(X+\tilde{T})$ by Theorem \ref{thm:pfaffkernel}. The cogate $\Theta$ is the tensor product of several smaller Pfaffian cogates, some of which we want to replace with SWAP gates.

 \begin{defn}
We say that a vector $\sum_{I\subseteq[n]}{\beta_I|I\rangle}$ has \emph{odd support} if $\beta_I=0$ for $|I|$ even. We define even support similarly. We also use these terms when referring to covectors.  
 \end{defn}

 For this technique to work, we require that $T$, $X$ are $n\times n$, with $n$ even. We would like to perform a change of basis on the SWAP gate so that it is equal to $P+Q$, where $P$ is a Pfaffian cogate and $Q=\sum_{I\subseteq[4]}{\beta_I\langle I|}$ has odd support. That is, $Q$ lies in the orthogonal complement of the subspace containing $\mcP_4$. We can then switch one Pfaffian cogate with the SWAP tensor: let $\hat{\Theta}$ be the tensor product of all the Pfaffian cogates we are not changing. We have that $\hat{\Theta}=\sum_{I\subseteq[n']}{\gamma_I\langle I|}$ which has even support. Thus $Q\ot \hat{\Theta}$ lies in the orthogonal complement of the subspace containing the Pfaffian gate $\Xi$. 
 
 \begin{lemma}\label{lem:orbclosure}
  Viewing SWAP as a vector in either $((\C^2)^*)^{\ot 4}$ or $(\C^2)^{\ot 4}$, we have $\overline{\SL(2,\C)^4.\tn{SWAP}}\cong \SL(2,\C)^2$. The orbit $\overline{\GL(2,\C)^4.\tn{SWAP}}\cong\hat{\sigma}(\End(\C^2)^2)$, where $\hat{\sigma}$ denotes the affine cone over the Segre embedding. In particular, the orbit of $\tn{SWAP}$ under the group $\SL(2,\C)^4$ is closed while the $\GL(2,\C)^4$ orbit of $\tn{SWAP}$ is not closed.
 \end{lemma}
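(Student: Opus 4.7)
The plan is to recognize the vectorized SWAP tensor as a product of two copies of the ``maximally entangled'' tensor $\omega := |00\rangle + |11\rangle \in \C^2\ot\C^2$ under a natural regrouping of the four $\C^2$-factors, and then convert the orbit problem into the familiar two-sided matrix action on $\Mat_{2\times 2}(\C)$.

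First I would rewrite
\[
\tn{SWAP} = |0000\rangle + |1010\rangle + |0101\rangle + |1111\rangle = \omega_{13}\ot\omega_{24},
\]
where the subscripts indicate that the two copies of $\omega$ occupy the factor pairs $\{1,3\}$ and $\{2,4\}$ of $(\C^2)^{\ot 4}$; this is immediate by grouping the four summands according to positions $(1,3)$ and $(2,4)$. Under this decomposition the action of $g_1\ot g_2\ot g_3\ot g_4$ splits as the action of $(g_1\ot g_3)$ on $\omega_{13}$ and of $(g_2\ot g_4)$ on $\omega_{24}$ independently, so the $\SL(2,\C)^4$-orbit (resp.\ $\GL(2,\C)^4$-orbit) of SWAP is the tensor product of two copies of the $\SL(2,\C)^2$-orbit (resp.\ $\GL(2,\C)^2$-orbit) of $\omega$ in $\C^2\ot\C^2$.

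Next I would use the standard isomorphism $\C^2\ot\C^2 \cong \End(\C^2) = \Mat_{2\times 2}(\C)$ sending $|i\rangle\ot|j\rangle \mapsto E_{ij}$, under which $\omega$ corresponds to $I_2$ and the action of $(g, h)$ becomes the two-sided action $M \mapsto gMh^T$. The $\SL(2,\C)^2$-orbit of $I_2$ is thus $\{gh^T : g, h \in \SL(2,\C)\} = \SL(2,\C)$, a closed subvariety of $\Mat_{2\times 2}$ isomorphic as a variety to $\SL(2,\C)$. The $\GL(2,\C)^2$-orbit is analogously $\{gh^T : g, h \in \GL(2,\C)\} = \GL(2,\C)$, an open dense subset of $\Mat_{2\times 2}$ whose Zariski closure is all of $\End(\C^2)$.

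Recombining the two independent factor pairs then gives the lemma: the $\SL(2,\C)^4$-orbit of SWAP equals a product of two closed copies of $\SL(2,\C)$ and is itself closed and isomorphic to $\SL(2,\C)^2$, while the $\GL(2,\C)^4$-orbit is $\GL(2,\C)\times\GL(2,\C)$ sitting inside $(\C^2)^{\ot 4}$ via $(A, B) \mapsto A\ot B$ after the reordering of factors into the pairs $\{1,3\}$ and $\{2,4\}$. Its Zariski closure is then the image of $\End(\C^2)\times\End(\C^2)$ under the same map, which is precisely the affine Segre cone $\hat{\sigma}(\End(\C^2)^2)$. The one point that requires a moment of justification is that the closure of a product of orbits really is the full Segre cone; this follows from the general fact that $\overline{(G_1\times G_2).(v_1\ot v_2)} = \overline{G_1.v_1}\ot\overline{G_2.v_2}$ when the two groups act independently on the two tensor factors, together with the closedness of the Segre variety in $\End(\C^2)\ot\End(\C^2)$.
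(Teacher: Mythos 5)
Your proposal is correct and is essentially the paper's own argument in different notation: the regrouping $\tn{SWAP}=\omega_{13}\ot\omega_{24}$ together with the identification $\C^2\ot\C^2\cong\End(\C^2)$, $\omega\mapsto I_2$, is exactly the paper's observation that $(M_4^T\ot M_3^T)M_{\tn{SWAP}}(M_1\ot M_2)=M_{\tn{SWAP}}(M_3^TM_1\ot M_4^TM_2)$, reducing both cases to the fact that products $gh^T$ exhaust $\SL(2,\C)$ (resp.\ $\GL(2,\C)$) and that the resulting orbit is the Segre image of $\SL(2,\C)^2$ (resp.\ has closure the affine Segre cone over $\End(\C^2)^2$). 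Both you and the paper treat the closedness of the $\SL$-orbit at the same level of detail, so nothing essential is missing relative to the paper's proof.
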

\begin{proof}
 It suffices to prove this for SWAP as a vector in $(\C^2)^{\ot 4}$. This is because both $\SL(2,\C)$ and $\GL(2,\C)$ are algebraic groups, meaning that inverse map is a morphism of varieties which will induce an isomorphism between the orbits in $((\C^2)^*)^{\ot 4}$ and $(\C^2)^{\ot 4}$. We recall that $$\tn{SWAP}=|0000\rangle+|0101\rangle+|1010\rangle+|1111\rangle$$ after applying the map $\phi:((\C^2)^*)^{\ot 2}\ot(\C^2)^{\ot 2}\to (\C^2)^{\ot 4}$ to the matrix $M_{\tn{SWAP}}=|00\rangle\langle00|+|01\rangle\langle10|+|10\rangle\langle01|+|11\rangle\langle11|$. The induced action by $\SL(2,\C)^4$ (and $\GL(2,\C)^4$) is given by $(M_4^T\ot M_3^T)M_{\tn{SWAP}}(M_1\ot M2)=M_{\tn{SWAP}}(M_3^TM_1\ot M_4^TM_1)$ since $M_{\tn{SWAP}}$ is the map sending $V_1\ot V_2\to V_2\ot V_1$, where $V_1,V_2\cong\C^2$. 
 
 But every element of $\SL(2,\C)$, $\GL(2,\C)$) can be written as $g^Th$ for $g,h\in\SL(2,\C)$, $g,h\in\GL(2,\C)$, respectively. As such, $\overline{\SL(2,\C)^4.\tn{SWAP}}$ is isomorphic to the algebraic closure $\sigma(\SL(2,\C)^2)\cong \SL(2,\C)^2$, where $\sigma$ is the Segre embedding. Similarly for $\overline{\GL(2,\C)^4.\tn{SWAP}}$, except the orbit closure is $\hat{\sigma}(\End(\C^2)^2)$, since it has a non-trivial algebraic closure and $\GL(2,\C)$ is a cone.
\end{proof}

 By Lemma \ref{lem:orbclosure}, it suffices to look at changes of basis of the form $M\ot N\ot I_2\ot I_2$ for $M,N\in\SL(2,\C)$. Let
 \begin{equation*}
  M=\begin{pmatrix}
     a&b\\
     c&d
    \end{pmatrix},\quad N=
    \begin{pmatrix}
     e&f\\
     g&h
    \end{pmatrix}
 \end{equation*}

\begin{tabular}{l l}
 $\tn{Then SWAP}(M\ot N\ot I_2\ot I_2)=$& $ae\langle0000|+bf\langle1100|+cg\langle0011|+$\\
 &$bg\langle1001|+cf\langle0110|+de\langle1010|+$\\
 &$ah\langle0101|+dh\langle1111|+Q$
 \end{tabular}
 
\noindent where $Q=\sum_{I\subseteq[4]}{\beta_I\langle I|}$ has odd support. We want the coefficients $ae$, $bf$, $cg$, $bg$, $cf$, $de$, $ah$, and $dh$ to satisfy the relations of being a Pfaffian cogate. That is, we want $dh=1$ and $ae=2bcfg-adeh$, which simplifies to $ae=bcfg$. This defines a variety of basis changes that let us replace a Pfaffian cogate with the SWAP gate. One solution is given by $d=h=1$, $a=e=\frac{1}{2}$, $b=f=\frac{1}{\sqrt{2}}$, and $c=g=-\frac{1}{\sqrt{2}}$. So we get our basis change from
\begin{equation*}
M=N= \begin{pmatrix}
 \frac{1}{2}&\frac{1}{\sqrt{2}}\\
 -\frac{1}{\sqrt{2}}&1
 \end{pmatrix}
\end{equation*}

\noindent The Pfaffian cogate is then given by $\sPfd(A)$ where
\begin{equation*}
 A=\begin{pmatrix}
    0&.5&.5&-.5\\
    -.5&0&-.5&.5\\
    -.5&.5&0&.5\\
    .5&-.5&-.5&0
   \end{pmatrix}.
\end{equation*}

So if we apply the basis change $\sPfd(A)(M^{-1}\ot M^{-1}\ot I_2\ot I_2)$, we can replace this Pfaffian cogate with the SWAP gate. More generally, for any solution of the above equations, we find a skew-symmetric matrix $S$, and then $\sPfd(S)(M^{-1}\ot N^{-1}\ot I_2\ot I_2)$ can be replaced with the SWAP gate. Note that this same construction allows a Pfaffian gate to be replaced with a SWAP gate. 

\begin{theorem}\label{thm:swapgates}
 There is a four dimensional variety of basis changes by $\SL(2,\C)^4$ such that SWAP$=P+Q$ where $P$ is Pfaffian and $Q$ has odd support.
\end{theorem}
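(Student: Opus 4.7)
The plan is to carry out the parameter count foreshadowed by the discussion immediately preceding the statement, with Lemma \ref{lem:orbclosure} doing most of the work of cutting the problem down to a manageable size.

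First I would invoke Lemma \ref{lem:orbclosure} to reduce from all of $\SL(2,\C)^4$ (twelve-dimensional) to the essentially six-dimensional family of basis changes of the form $M \otimes N \otimes I_2 \otimes I_2$ for $M,N \in \SL(2,\C)$. The lemma shows that the $\SL(2,\C)^4$-orbit of SWAP is isomorphic to $\SL(2,\C)^2$, so any basis change affecting SWAP can be represented in this form without loss of generality.

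Next I would expand SWAP$(M \otimes N \otimes I_2 \otimes I_2)$ explicitly, as in the paragraph preceding the theorem, and separate it into an even-support part $P$ and an odd-support remainder $Q$. Writing $M = \begin{pmatrix} a & b \\ c & d \end{pmatrix}$ and $N = \begin{pmatrix} e & f \\ g & h \end{pmatrix}$, the eight even-support coefficients are the bilinear monomials $ae, bf, cg, bg, cf, de, ah, dh$. By Theorem \ref{thm:rels} and the discussion in Section \ref{sec:background}, $\mcP_4^\vee$ is cut out of its even-support hyperplane by exactly one Grassmann--Pl\"ucker quadric together with the normalization $\Pf(\emptyset)=1$. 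Thus the condition that $P$ be Pfaffian collapses to two polynomial equations: the coefficient of $\langle 1111|$ must equal $1$, giving $dh=1$; and the quadric $\alpha_\emptyset = \alpha_{\{1,2\}}\alpha_{\{3,4\}} - \alpha_{\{1,3\}}\alpha_{\{2,4\}} + \alpha_{\{2,3\}}\alpha_{\{1,4\}}$ yields $ae = 2bcfg - adeh$, which under $dh=1$ simplifies to $ae=bcfg$.

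Finally I would perform the dimension count. The ambient parameter space $\SL(2,\C)^2$ is six-dimensional; the two additional conditions $dh=1$ and $ae=bcfg$ are algebraically independent of one another and of the determinant relations $ad-bc = eh-fg = 1$, as a Jacobian check at the explicit smooth point from the discussion ($d=h=1$, $a=e=\tfrac{1}{2}$, $b=f=-c=-g=\tfrac{1}{\sqrt{2}}$) confirms. Hence the solution locus is a four-dimensional subvariety of the space of basis changes, as claimed. The only real subtlety is the middle step: one must confirm that no other Grassmann--Pl\"ucker-type relation survives at arity four beyond the single quadric, and that the odd-support coefficients of $Q$ contribute nothing to the Pfaffian-cogate constraints because $\mcP_4^\vee$ already lies in the even-support hyperplane. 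Both facts are immediate from the explicit description of $\mcP_4^\vee$ recorded in Section \ref{sec:background}.
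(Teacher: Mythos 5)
Your proposal is correct, and it replaces the paper's final verification step with a different (and arguably more self-contained) argument. The setup is in fact the same as the paper's: the discussion preceding the theorem already invokes Lemma \ref{lem:orbclosure} to reduce to basis changes of the form $M\ot N\ot I_2\ot I_2$ and derives exactly your two conditions $dh=1$ and $ae=bcfg$ (and, as you note, one does not even need the full force of Theorem \ref{thm:rels} here, since on the even-support part the six middle coefficients determine the skew-symmetric matrix uniquely under $\sPfd$, leaving only the normalization and the single Pfaffian quadric). Where you diverge is the proof of the theorem itself: the paper parameterizes the orbit of SWAP by $\SL(2,\C)^2$ and the cogate variety by $\sPfd:\mathfrak{J}_4\to\mcP^\vee_4$, and certifies the dimension by a Gr\"obner-basis computation in Macaulay2, whereas you count by hand, cutting the six-dimensional $\SL(2,\C)^2$ by the two equations and checking independence via the Jacobian at the explicit point $a=e=\tfrac12$, $b=f=\tfrac1{\sqrt2}$, $c=g=-\tfrac1{\sqrt2}$, $d=h=1$; that rank computation does go through (the four gradients are independent there), so your argument is sound and removes the reliance on computer algebra. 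One small caveat: a Jacobian check at a single point only certifies a four-dimensional component through that point (local dimension $\le 4$ by smoothness, $\ge 4$ by the height bound), which suffices for the existence statement but not, by itself, for the assertion that the whole solution locus has dimension four. This is easily repaired, and in a way that also eliminates the point check: on $dh=1$ solve $h=1/d$, $a=(1+bc)/d$, $e=d(1+fg)$, so that $ae=bcfg$ becomes $1+bc+fg=0$; the locus is therefore isomorphic to $\C^*$ times a smooth affine quadric threefold, an irreducible fourfold, which recovers the paper's conclusion globally and entirely by hand.
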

\begin{proof}
 We note that the variety $\mcP_4^\vee$ has a polynomial parameterization by the $\sPfd$ map. The orbit of the SWAP tensor is parameterized by a choice of element in $\SL(2,\C)^2$. More precisely we have a the polynomial isomorphism $\Psi:\SL(2,\C)^2\to \SL(2,\C)^4.\tn{SWAP}$ given by $(M_1,M_2)\mapsto\sum{\alpha_I\langle I|}$ and  $\sPfd:\mathfrak{J}_4\to\mcP^\vee_4$ given by the $\sPfd(N)=\sum{\beta_i\langle I|}$. Then we find a Gr\"obner basis for the ideal $I:=\langle \alpha_I-\beta_I:\;|I|\tn{ even}\rangle+\langle \det(M_1)-1,\det(M_2)-1\rangle$ in Macaulay2.
\end{proof}

A caveat to this construction: we cannot replace more than one cogates we found in Theorem \ref{thm:swapgates} with a SWAP gate. This is because if the SWAP gate is written as $P+Q$ with $P$ a cogate and $Q$ with odd support under a change of basis, $(P+Q)^{\ot 2}=P\ot P+P\ot Q+Q\ot P+Q\ot Q$. We see that $P\ot P$ is again a cogate, $P\ot Q$, and $Q\ot P$ have odd support, but $Q\ot Q$ has even support. If $P\ot P+Q\ot Q$ is a cogate, only then this would be fine. 

In general, the more SWAP gates one wishes to replace, the more restrictions one gets. We need to look at $\tn{SWAP}^{\ot k}$ and try to write it as $P+Q$ as before.  In full generality, when we want to replace a Pfaffian cogate with another tensor, we are considering the following problem:

\begin{problem}
 Let $W_n\subset((\C^2)^*)^{\ot n}$ be the subspace of tensors of even support and $P_{W_n}:((\C^*)^2)^{\ot n}\to W$ be the associated linear projector. For a tensor $G\in((\C^2)^*)^{\ot n}$, determine if $P_{W_n}(\overline{\SL(2,\C)^n.G})\cap\mcP^\vee_n\ne\emptyset$ or if
 
 \noindent$P_{W_n}(\GL(2,\C)^n.G)\cap G\ne\emptyset$.
\end{problem}

 If the projection of the orbit closure of $G$ onto $W_n$ intersects the orbit closure of Pfaffian cogates, then it can be used as a cogate in a Pfaffian circuit, replacing a Pfaffian cogate after a suitable change of basis, without increasing the time complexity. We now look at the case of $\tn{SWAP}^{\ot k}$ for $k>1$ and we find that the above construction no longer works.

\begin{theorem}\label{thm:swap2}
 For $G=\tn{SWAP}^{\ot 2}$, $$\P_{W_8}(\overline{\SL(2,\C)^8.G})\cap\C\mcP^\vee_8=$$ $$P_{W_8}(\GL(2,\C)^8.G)\cap \C^* \mcP^\vee_8=\emptyset.$$
\end{theorem}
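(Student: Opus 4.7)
My approach is to adapt the Macaulay2 strategy of Theorem~\ref{thm:swapgates} to the arity-$8$ setting. Lemma~\ref{lem:orbclosure} lets us reduce the parameterization: since $\overline{\SL(2,\C)^4\cdot\tn{SWAP}}\cong\SL(2,\C)^2$ is already closed, the $\SL(2,\C)^8$-orbit of $\tn{SWAP}^{\ot 2}$ is the tensor product of two copies of this closed orbit, and a short Hilbert--Mumford argument (using that the single-SWAP orbit does not accumulate at $0$) shows the joint orbit is itself closed, hence equal to its closure. Thus the orbit is parameterized by four matrices $(M_1,M_2,M_3,M_4)\in\SL(2,\C)^4$, with $(M_1,M_2)$ acting on the first SWAP factor as $M_1\ot M_2\ot I_2\ot I_2$ and $(M_3,M_4)$ acting analogously on the second. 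Reusing the expansion of Theorem~\ref{thm:swapgates}, each transformed SWAP splits as $E_i+O_i$, with $E_i\in W_4$ the even-support part and $O_i$ the odd-support part, each bilinear in the entries of $M_{2i-1},M_{2i}$. Since the cross terms $E_1\ot O_2$ and $O_1\ot E_2$ have odd total support, the projection onto $W_8$ simplifies to
$$P_{W_8}\big(\tn{SWAP}^{\ot 2}\cdot(M_1\ot M_2\ot I_2\ot I_2\ot M_3\ot M_4\ot I_2\ot I_2)\big)=E_1\ot E_2+O_1\ot O_2.$$

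I would then assemble the polynomial ideal expressing ``this projection lies in $\C\mcP^\vee_8$''. Writing $\alpha_I$ for the coefficient of $\langle I|$ in $E_1\ot E_2+O_1\ot O_2$, and $\beta_I=\Pf(X_{\bar I})$ for the coefficient of $\sPfd(X)$ at $\langle I|$ where $X\in\mathfrak{J}_8$ is a generic skew-symmetric matrix, and introducing a scalar $\lambda$ to handle the cone structure, consider
$$\mathcal{I}_{\SL}=\langle \alpha_I-\lambda\beta_I\;:\;I\subseteq[8],\;|I|\tn{ even}\rangle+\langle\det M_i-1\;:\;i=1,\ldots,4\rangle$$
inside $\C[\{M_i\text{-entries}\}_{i=1}^4,\{X_{ij}\}_{1\le i<j\le 8},\lambda]$. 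Emptiness of $P_{W_8}(\SL(2,\C)^8\cdot G)\cap\C\mcP^\vee_8$ is equivalent to $\mathcal{I}_{\SL}=(1)$, which is verified by a Gr\"obner basis computation in Macaulay2. For the $\GL$ case one applies the Rabinowitsch trick: replace each $\det M_i-1$ by $u_i\det M_i-1$ to enforce invertibility, and adjoin $\mu\lambda-1=0$ to exclude $\lambda=0$ (so the intersection is with $\C^*\mcP^\vee_8$ rather than the full cone), then rerun the computation.

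The main obstacle is the sheer size of the calculation: $\mathcal{I}_{\SL}$ has $128$ generators of the form $\alpha_I-\lambda\beta_I$ (one per even-cardinality subset $I\subseteq[8]$) in roughly $45$ variables, and each $\Pf(X_{\bar I})$ is itself of degree up to $4$ in the $X_{ij}$. The dimension count --- the projected orbit has dimension at most $12$, while $\C\mcP^\vee_8$ has dimension $29$, inside the $128$-dimensional ambient $W_8$ --- makes empty intersection highly plausible, but certifying triviality of $\mathcal{I}_{\SL}$ directly may require first exploiting the $\Sn_2$ symmetry that swaps the two SWAPs (together with its reindexing of $X$), or pre-eliminating variables using the $\SL$ constraints and the natural block decomposition of $X$ into two $4\times 4$ skew-symmetric diagonal blocks and a $4\times 4$ off-diagonal coupling, to bring the problem down to a scale tractable in modern computer algebra.
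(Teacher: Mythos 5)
Your proposal is correct and takes essentially the same route as the paper: reduce the parameterization of the $\tn{SWAP}^{\ot 2}$ orbit to four $2\times 2$ matrices via Lemma \ref{lem:orbclosure}, encode membership of the $W_8$-projection in the cone over $\mcP^\vee_8$ as a polynomial ideal, and certify emptiness with a Gr\"obner basis computation in Macaulay2. The only differences are implementation details: the paper handles the cone through the Grassmann--Pl\"ucker equations (dropping the normalization) rather than a scalar $\lambda$ times the $\sPfd$ parameterization, and for the $\GL$ case it parameterizes by $\End(\C^2)^4$ and exhibits $\det(M_1)\cdots\det(M_4)$ as a member of the intersection ideal (plus $\C^*$-invariance of the projected $\GL$-orbit) instead of your Rabinowitsch-style saturation, which also avoids needing your closedness claim for the orbit of $\tn{SWAP}^{\ot 2}$.
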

\begin{proof}
 
 We first looked at the ideal defining $\P_{W_8}(\overline{\SL(2,\C)^8.G})$ which is parameterized by a choice of four matrices in $\SL(2,\C)$. Intersecting this ideal with the ideal parameterizing $\C \mcP_8^\vee$, without the relation $a_{|1\cdots1\rangle}=1$, in Macaulay2 revealed that the intersection of the closures of these sets were empty. 
 
 We then considered the ideal $I=V(P_{W_8}(\End(\C^2)^8.G)\cap \mcP_8^\vee)$. We note that since $P_{W_8}(\GL(2,\C)^8.G)$ is closed under multiplication by elements of $\C^*$, it suffices to check that 
 $P_{W_8}(\GL(2,\C)^8.G)\cap \mcP^\vee_8=\emptyset.$ 

The space $P_{W_8}(\End(\C^2)^8.G)$ is parameterized by choosing $M_1,\dots,M_4\in\End(\C^2)$. We found that $I$ contained the polynomial $\det(M_1)\cdots\det(M_4)$ using Macaulay2. Thus at least one of the matrices parameterizing the intersection is not invertible. This shows that 
 $$P_{W_8}(\GL(2,\C)^8.G)\cap \mcP^\vee_8=\emptyset.$$
\end{proof}

As it turns out, the inability to replace a Pfaffian cogate with $\tn{SWAP}^{\ot 2}$ will make it impossible to replace a Pfaffian cogate with $\tn{SWAP}^{\ot k}$ for $k>2$. Let us write $\tn{SWAP}=P_{W_4}(\tn{SWAP})+P_{W_4^\perp}(\tn{SWAP})$ and then see that 
$$\tn{SWAP}^{\ot k}=\sum_{\stackrel{U_j=W_4,W_4^\perp}{j\in[k]}}{\ott_{i=1}^k{P_{U_i}(\tn{SWAP})}}.$$ Furthermore, each of these summands lie in pairwise orthogonal subspaces. Looking at $P_{W_{4k}}(\tn{SWAP}^{\ot k})$, this projects onto those subspaces $\ott_{i=1}^{k}{U_i}$ where an even number of $U_j=W_4^\perp$.

\begin{theorem}\label{thm:nomoreswaps}
 For $k\ge2$, we have that for $G=\tn{SWAP}^{\ot k}$, $$\P_{W_{4k}}(\overline{\SL(2,\C)^{4k}.G})\cap\C\mcP^\vee_{4k}=$$ $$P_{W_{4k}}(\GL(2,\C)^{4k}.G)\cap\C^* \mcP^\vee_8=\emptyset.$$
\end{theorem}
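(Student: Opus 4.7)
The plan is to reduce to the base case $k=2$ (Theorem \ref{thm:swap2}) by a partial-contraction argument that isolates the first two SWAP blocks. Given $g\in\SL(2,\C)^{4k}$ (the $\GL$ case being parallel), I would factor $g=g'\ot g''$ with $g'$ acting on the first $8$ qubits and $g''$ on the remaining $4k-8$, and set $A=g'.\tn{SWAP}^{\ot 2}$ and $B=g''.\tn{SWAP}^{\ot(k-2)}$. Using the parity decomposition displayed just before the theorem together with the splitting $W_{4k}=(W_8\ot W_{4k-8})\op(W_8^\perp\ot W_{4k-8}^\perp)$ of the even-support subspace, I obtain
\begin{equation*}
 P_{W_{4k}}(g.\tn{SWAP}^{\ot k}) = P_{W_8}(A)\ot P_{W_{4k-8}}(B) + P_{W_8^\perp}(A)\ot P_{W_{4k-8}^\perp}(B).
\end{equation*}
Assume for contradiction this equals $c\,\sPfd_{4k}(N)$ with $c\neq 0$, and block $N=\bigl(\begin{smallmatrix}A' & B' \\ -B'^T & C\end{smallmatrix}\bigr)$ so that $A'$ is $8\times 8$.

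The key technical input is a partial-pairing version of Theorem \ref{thm:pfaffkernel}: for any $(4k-8)\times(4k-8)$ skew $X$ with $C+\tilde X$ invertible, pairing $\sPfd_{4k}(N)$ with $\sPf_{4k-8}(X)$ on the last $4k-8$ indices produces $\Pf(C+\tilde X)\cdot\sPfd_8\bigl(A'+B'(C+\tilde X)^{-1}B'^T\bigr)$ (up to the sign conventions inherited from the $\tilde{\cdot}$ operation). This is the Schur-complement identity for Pfaffians applied blockwise to $N$ shifted by $\tilde X$. On the left-hand side of the assumed equality, because $\sPf(X)\in W_{4k-8}$ is orthogonal to $P_{W_{4k-8}^\perp}(B)$, the pairing collapses to $\langle B,\sPf(X)\rangle\cdot P_{W_8}(A)$. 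Matching both sides,
\begin{equation*}
 \langle B,\sPf(X)\rangle\cdot P_{W_8}(A)\in\C\cdot\mcP^\vee_8
\end{equation*}
for every admissible $X$.

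To finish, I would use that the linear span of $\{\sPf(X):X\in\mathfrak{J}_{4k-8}\}$ equals $W_{4k-8}$, which one verifies by starting with $X=0$ to isolate $|\emptyset\rangle$, then choosing $X$ with a single nonzero off-diagonal entry to reach each $|\{i,j\}\rangle$, and iterating up the parity ladder. If $P_{W_{4k-8}}(B)\neq 0$, pick any $X$ with $\langle B,\sPf(X)\rangle\neq 0$ (perturbing within this Zariski-open locus to simultaneously secure invertibility of $C+\tilde X$); this forces $P_{W_8}(A)\in\C\mcP^\vee_8$, contradicting Theorem \ref{thm:swap2}. If instead $P_{W_{4k-8}}(B)=0$, the first summand vanishes and $P_{W_{4k}}(g.\tn{SWAP}^{\ot k})=P_{W_8^\perp}(A)\ot B$ is supported only on $\langle I\cup J|$ with $|I|,|J|$ both odd, so the coefficient of $\langle[4k]|$ is zero; but in $c\,\sPfd(N)$ this coefficient equals $c\,\Pf(N_\emptyset)=c$, forcing $c=0$ and ruling out the $\C^*\mcP^\vee_{4k}$ intersection directly. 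For the $\C\mcP^\vee_{4k}$ statement, the residual possibility of landing at $0$ in the projection must be excluded from the orbit closure by exhibiting an $\SL(2,\C)^{4k}$-invariant (e.g.\ a product of Cayley-hyperdeterminant-type contractions, one per SWAP block) that is nonzero on $G$.

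The main obstacle will be establishing the partial Schur-complement identity for Pfaffians cleanly—particularly tracking the signs coming from the $\tilde{\cdot}$ convention of Theorem \ref{thm:pfaffkernel}—and verifying that the joint genericity conditions on $X$ hold on a nonempty Zariski-open set whenever $P_{W_{4k-8}}(B)\neq 0$. Once those are in place, the reduction to Theorem \ref{thm:swap2} is immediate.
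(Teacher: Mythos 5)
Your route is genuinely different from the paper's: the paper reduces to Theorem \ref{thm:swap2} by restricting the Grassmann--Pl\"ucker \emph{equations} of Theorem \ref{thm:rels} (choosing $\overline S=\overline T=\emptyset$, $\overline R=[8]$, so that the relations become the $\mcP^\vee_8$-cone relations on $\beta\,P_{W_8}(\tn{SWAP}^{\ot 2})$), whereas you reduce via the \emph{parameterization} $\sPfd$, contracting against gates $\sPf_{4k-8}(X)$ and invoking a Pfaffian Schur-complement identity. Your parity splitting of $W_{4k}$, the spanning claim for $\{\sPf(X)\}$, and the genericity argument are fine, and your treatment of the $\GL$/$\C^*$ half is essentially sound (modulo writing out the partial-contraction identity, which is standard). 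But for the $\SL$/cone half there are two genuine gaps. First, you identify points of $\C\mcP^\vee_{4k}$ with $c\,\sPfd_{4k}(N)$, $c\neq 0$; the cone as defined in Theorem \ref{thm:rels} (all Grassmann--Pl\"ucker relations, no normalization) also contains $0$ and nonzero points whose $\langle[4k]|$-coefficient vanishes (boundary cells of the spinor variety), and for those there is no block matrix $N$, the Schur-complement step does not apply, and your case-2 conclusion ``$c=0$, contradiction'' evaporates --- a nonzero projection with vanishing top coefficient could a priori still lie in $\C\mcP^\vee_{4k}$. This is exactly the issue the paper's equation-based reduction avoids, since every cone point satisfies the relations regardless of which cell it sits in; your argument could be patched (e.g.\ by a limiting argument showing partial contraction maps the closed cone into the closed cone), but as written it only sees the dense cell.

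Second, your argument quantifies over honest group elements $g$, so it addresses $P_{W_{4k}}(\SL(2,\C)^{4k}.G)$ but not the closure $\overline{\SL(2,\C)^{4k}.G}$ appearing in the statement: an orbit avoiding the closed set $P_{W_{4k}}^{-1}(\C\mcP^\vee_{4k})$ does not imply its closure does, and boundary points need not have the product form $A\ot B$ with $A$ in the $\SL(2,\C)^8$-orbit of $\tn{SWAP}^{\ot 2}$ that your factorization requires. Your proposed repair --- an $\SL(2,\C)^{4k}$-invariant nonzero on $G$ --- is off-target: since invariants are constant on orbit closures, such an invariant only shows $0\notin\overline{\SL(2,\C)^{4k}.G}$ in the ambient space; it does not show that no closure point lies in $W_{4k}^\perp$ (i.e.\ projects to $0$), nor does it exclude nonzero boundary points whose projection lands in the cone. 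To close the argument you would need either an extension of Lemma \ref{lem:orbclosure} showing the $\SL$-orbit of $\tn{SWAP}^{\ot k}$ is closed (or at least controlling its boundary), or to work, as the paper effectively does, with the defining ideal of the closure rather than with orbit representatives.
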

\begin{proof}
 We look at the relations defined in Theorem \ref{thm:rels}. We let $\overline{S}=\overline{T}=\emptyset$ and $\overline{R}=[8]$ (where $\overline{S}$ is the complement of $S$ since we are looking at cogates). This corresponds to looking at the summands of $$P_{W_4}(\tn{SWAP})\ot P_{W_4}(\tn{SWAP})\ot\cdots\ot P_{W_4}(\tn{SWAP})\quad\tn{and}$$
 $$P_{W^\perp_4}(\tn{SWAP})\ot P_{W^\perp_4}(\tn{SWAP})\ot P_{W_4}(\tn{SWAP})\ot\cdots\ot P_{W_4}(\tn{SWAP})$$ of $P_{W_{4k}}(\tn(SWAP)^{\ot k})$. Now let $\beta$ be the coefficient of the vector $|0\cdots 0\rangle$ in the vector $P_{W_{4(k-2)}}(\tn{SWAP}^{\ot(k-2)})$. The relations induced by this choice of subset are actually the Pfaffian cogate relations on $\beta P_{W_8}(\tn{SWAP}^{\ot 2})$ which we know from Theorem \ref{thm:swap2} has no solution.
 
\end{proof}

We note that the for any $G$ that one wishes to replace a Pfaffian cogate with, if $k$ copies cannot be placed into a Pfaffian circuit, then $k+1$ cannot either, using arguments completely analogous to those in Theorem \ref{thm:nomoreswaps}.

\section{Conclusion}

In this paper we have introduced two algebraic methods for altering Pfaffian circuits so that value remains unchanged. This were using orbit closures as well as the fact that the standard pairing of vectors is degenerate when restricted to Pfaffian (co)gates. 

The point of considering these algebraic approximations is to increase the gates available to those designing algorithms using tensor networks. We have considered the SWAP gate as a proof of concept as it is a very natural gate and an active area of research is focused on the relationship between planarity, computational complexity, and Pfaffian circuits.

This methods, as we have shown, allow for a single SWAP gate to be approximated by a Pfaffian circuit but it seems that planarity is very strongly tied to these networks. Still, for considering other gates, it may be that these methods prove to be more robust. 

\subsection*{Acknowledgments} The research leading to these results has received funding from the European Research Council under the European Union’s Seventh Framework Programme (FP7/2007-2013) / ERC grant agreement No 339109.

 \bibliographystyle{plain}
\bibliography{bibfile}

\end{document}